\def\R{{\bf{R}}}
\def\N{\bf{N}}
\def\C{\bf{C}}
\def\cal{\mathcal}
\def\F{{\cal F}}
\def\pa{\partial}
\def\ph{\varphi}
\def\vp{\varphi}
\def\b{\infty}
\def\wc{\rightharpoonup}
\def\scon{\relbar\joinrel\rightarrow}
\def\be{\begin{equation}}
\def\ee{\end{equation}}
\def\ds{\displaystyle}
\def\eps{\varepsilon}
\def\SE{\cal{SE} (\R^d\times \textbf{S}^{d-1})}
\def\Sd{{\bf{S}}^{d-1}}
\newtheorem{theorem}{Theorem}[section] % 1st argument is your name for it
\newtheorem{proposition}[theorem]{Proposition}
\title[$H$-distributions via Sobolev spaces]% end with percent
 {$H$-distributions via Sobolev spaces} % This is the full title of the paper
\author{J.~Aleksi\'c, S.~Pilipovi\'c and I.~Vojnovi\'c}
\begin{document}
\maketitle

\begin{abstract}
%H-distributions involved by the weakly convergent sequences through the Sobolev spaces dualities are determined. 
H-distributions associated to weakly convergent sequences in Sobolev spaces are determined.
It is shown that a weakly convergent sequence $(u_n)$ in $W^{-k,p}( \R^d)$ has the property that $\theta u_n$ converges strongly in $W^{-k,p}(\R^d)$ for every $\theta\in\mathcal S(\R^d)$ if and only if all  H-distributions related to this sequence are equal to zero. 
Results are applied  on a weakly convergent sequence of solutions to a family of linear first order PDEs.\\ 2000 Mathematics Subject Classification: 46F25 (primary), 46F12, 40A30, 42B15 (secondary)
\end{abstract}

%\part{Use this type of header for very long papers only}
% use lowercase except for proper names

\section{Introduction}

\emph{H-measures},  or \textit{Microlocal defect measures}, of Tartar \cite{Tar} and  G\'erard \cite{Ger}
obtained  for weakly convergent sequences in $L^2(\R^d)$, and their generalization to $L^p(\R^d)$, $p\in(1,\b)$,  called H-distributions \cite{NADM}, are widely used to determine whether a weakly convergent sequence of solutions to certain classes of equations converges strongly. For example, by using H-measures the authors of \cite{AMP} obtained $L^1_{\rm loc}$-precompactness of solutions to diffusion-dispersion approximation for a scalar conservation law. In homogenization theory applications of these objects can be found e.g. in \cite{Ant3} and \cite{Mie}. In \cite{Pan}, H-measures are applied to family of entropy solutions of a first order quasilinear equation and in  \cite{Sazenkov} to ultraparabolic equation. The list of applications of these objects is far from being complete.

Our aim in this paper is to extend the concept of H-distributions  to the Sobolev spaces.  
For the purposes of this paper, we introduce in  Subsection \ref{ds} new tensor product - spaces of test functions and distributions. For the reader's convenience, we give
full description of such spaces in the Appendix (Propositions \ref{pr1} and \ref{pr2}).

In order to use the duality $W^{-k,p}$-$W^{k,q}$, $q=\frac p{p-1}$, $k\in{\N}_0$, we prove the existence result for H-distributions associated to a weakly convergent sequence in $L^p(\R^d)$;
in  Theorem \ref{prva} we extend the result of  \cite[Theorem 2.1]{NADM} since we did not use the localization coming from the compactly supported test functions.
H-distributions of Theorem \ref{prva} are defined on  the space of rapidly decreasing functions. This leads to  the improvements of results of \cite{NADM}  in the case of $L^p-$spaces.  In 
Theorem \ref{tSP} we prove the existence of   H-distributions for weakly convergent sequences in Sobolev spaces.
Our main theorem, Theorem \ref{th7}, shows that
if for a given weakly convergent sequence $u_n\wc 0$ in $W^{-k,p}(\R^d)$ and every weakly convergent sequence $v_n\wc 0$ in $W^{k,q}(\R^d)$  the corresponding H-distributions are  equal to zero, then for every $ \ph\in{\cal S}(\R^d)$, $(\vp u_n)$ converges strongly to zero in $W^{-k,p}(\R^d)$.
Clearly, the converse assertion also holds. As an application, we analyze in Theorem \ref{th8}  a weakly convergent sequence $(u_n)$ 
of solutions to $\sum_{i=1}^d \pa_i  \left( A_i(x)u_n \right)=f_n$  in $W^{-k,p}(\R^d), d>\frac{p}{p-1}$,
and show that the supports of the  corresponding H-distributions are concentrated on the characteristic set 
$\{(x,\xi): \sum_{i=1}^{d}A_i(x)\xi_i=0\},$ under the new condition that for every $\vp\in\cal S(\R^d)$, $(\vp f_n)$ strongly converges to zero in 
$W^{-k-1,p}(\R^d)$. Moreover, if all H-distributions assigned to this equation are equal to zero, then $(\vp u_n)$ converges strongly to zero in
$W^{-k,p}(\R^d)$. 
The corresponding results for $L^2(\R^d)$ and $L^q(\R^d)$ are obtained in \cite{Tar} and  \cite{NADM}, respectively. Even in mentioned  cases our results  with $k=0$ extend previous  results since the non-locality is the essential part of our approach. Moreover, 
the results from the recent contributions in which the H-distributions were used cf. \cite{Lazar,Misur,Rindler} can be extended to a more general situations (in the Sobolev spaces with negative coefficients) by using results from this paper.

\section{Basic definitions and assertions}

\subsection{Some spaces of distributions }\label{ds}
We refer to \cite{Adams} for the Sobolev spaces $W^{k,q}(\R^d)$. 
If $k>\frac dq$,  then $W^{k,q}(\R^d)\subset C_0(\R^d)$, where $ C_0(\R^d)$ is the space of continuous functions vanishing at infinity. The dual $\left(W^{k,q}(\R^d)\right)' = :W^{-k,p}(\R^d)$ is isometrically isomorphic to the Banach space consisting of distributions $u\in {\cal S}'(\R^d) $ of the form $u=\ds\sum_{|\alpha|\leq k}\pa^\alpha u_\alpha$, where  all $u_\alpha \in L^p(\R^d)$, normed by $\|u\|:=\inf\Big\{ \Big( \ds\sum_{|\alpha|\leq k} \|u_\alpha\|^p_p \Big)^{1/p}\,:\, u=\ds\sum_{|\alpha|\leq k}\pa^\alpha u_\alpha \Big\},$ cf. \cite[Theorem 3.10, p. 50]{Adams}.

In order to give clear explanations concerning a new space, which will be denoted by $\cal{SE} (\R^d\times \textbf{S}^{d-1})$, and its dual $\cal{SE}' (\R^d\times \textbf{S}^{d-1})$, we will use some classical results, \cite{RS} and \cite{Aubin},  of $L^2$ and Sobolev theory  
for the 
%compact manifold 
unit sphere $\textbf{S}^{d-1}$ as well as of \cite{Atkinson} for some results for $C^k$ and $L^2$ functions on $\Sd$. Concerning Sobolev spaces and distributions on a manifold, we refer to \cite{Su} and for tensor product of test spaces, to \cite{TrevesTVS}.

We define the space of smooth functions  $\SE$ by the sequence of norms 
\begin{equation}\label{d1}
p^\infty_{\R^d\times\Sd, k}(\theta ) = \sup_{(x,\xi)\in\R^d\times\Sd, |\alpha+\beta |\leq k} \langle x\rangle^k |(\Delta^\star_\xi)^\alpha \pa_x^\beta \, \theta(x,\xi)|,
\end{equation}
where $\langle x\rangle^k=(1+|x|^2)^{k/2}$ and $\Delta^\star$ is the Laplace-Beltrami operator. 
The space $\SE$ is a Fr\' echet space and can be identified with the completion of tensor product $\mathcal S({\bf R}^{d})  \hat{\otimes}   \mathcal E({\bf S}^{d-1}) $, as was shown in 
Proposition \ref{pr2} in the Appendix. Complete description of this space can be found in the  Appendix.

\subsection{H-distributions on $L^p$ spaces}

A bounded function $\psi$, on $\R^d$,  is called  $L^p$-Fourier multiplier   if $ f\mapsto {\cal A}_\psi(f):=(\psi\hat{f})^{\check{}}$ is a bounded mapping from $\cal S(\R^d)$ to $L^p(\R^d)$ and can be continuously extended to a mapping from $L^p(\R^d)$ to $L^p(\R^d)$.
Here $\ds\hat f(\xi) = {\cal F}[f](\xi) =\int_{\R^d} e^{-2\pi ix\cdot\xi} f(x)\,dx $ denotes the Fourier transform on $\R^d$, while  
$\ds\check g(x) = {\cal F}^{-1}[g](x) =\int_{\R^d} e^{2\pi ix\cdot\xi} g(\xi)\,d\xi $ 
denotes the inverse Fourier transform.
The space of $L^p$-Fourier multipliers, denoted by ${\cal M}_p(\R^d)$, $1< p< \b$  (cf. \cite{Gra}), is  supplied by the norm $\|\psi\|_{\cal M_p}:=\|{\cal A}_\psi\|_{L^p\to L^p},$ where $\|\cdot\|_{L^p\to L^p}$ is the standard operator norm.

If $\psi\in C^\kappa(\R^d \backslash \{0\})$, $\kappa =  [\frac d2]+1$,  is homogeneous of zero degree (i.e. $\psi(\lambda\xi)=\psi(\xi)$, $\lambda>0$), then $\psi\in L^\infty(\R^d)$ and 
\be\label{Mih}
|\pa_\xi^\alpha\psi(\xi)| \leq A |\xi|^{-|\alpha|}, \quad \xi\in \R^d \backslash \{0\},
\ee
for every $|\alpha|\leq \kappa$ (with  $A= \ds \max_{|\beta|\leq \kappa} \sup_{\xi\neq 0} |\xi|^\alpha |\pa^\beta \psi|$, cf. \cite[p. 120]{AH}).  Thus $\psi$ fulfills  conditions from the Mihlin theorem (cf. \cite{Gra}):
\emph{Let $\psi$ be a complex-valued bounded  function on $\R^d\backslash \{0\}$ that satisfies  (\ref{Mih})
     for all multi-indices $|\alpha| \leq [\frac d2]+1$.    
     Then $\psi\in {\cal M}_p(\R^d)$  for any  $1<p<\b$ and
\begin{equation}\label{Mih2}
\|\psi\|_{{\cal M}_p}\leq C_d\max \left\{p, \frac1{p-1} \right\} (A+\|\psi\|_\b).
\end{equation}}
Moreover, if $\psi\in C^\kappa ({{\bf{S}}}^{d-1})$, 
then constant $A$ in (\ref{Mih2}) can be replaced by 
$\|\psi\|_{C^\kappa({\bf{S}}^{d-1})}$. 

Fourier multiplier operators ${\cal A}_\psi$  with symbol $\psi\in C^{\kappa}({\bf{S}}^{d-1})$ can be defined on $W^{-k,p}(\R^d)$, via duality
$$
\,_{ W^{-k,p}}\langle {\cal A}_\psi u, v \rangle_{ W^{k,q}} := \,_{ W^{-k,p}}\langle u, {\cal A}_{\bar\psi} v \rangle_{ W^{k,q}}.
$$
Since  $\pa^\alpha {\cal A}_{\bar\psi} v= {\cal A}_{\bar\psi}(\pa^\alpha v)$, we know that ${\cal A}_{\bar\psi} v \in  W^{k,q}(\R^d)$.
If 
$u\in W^{-k,p}(\R^d)$ is 
of the form $u=\ds\sum_{|\alpha|\leq k}\pa^\alpha u_\alpha$, then for all $v\in W^{k,q}(\R^d)$,
\begin{eqnarray}
\nonumber\ds
&\ds\,_{ W^{-k,p}}\langle {\cal A}_\psi u, v \rangle_{ W^{k,q}} = \sum_{|\alpha|\leq k} \,_{ W^{-k,p}}\langle \pa^\alpha u_\alpha , {\cal A}_{\overline\psi} v \rangle_{ W^{k,q}} =\\
& \ds =\nonumber
\sum_{|\alpha|\leq k} (-1)^{|\alpha|}  \,_{L^p}\langle u_\alpha,  {\cal A}_{\overline\psi} (\pa^{\alpha} v) \rangle_{L^q} =
\sum_{|\alpha|\leq k} (-1)^{|\alpha|}  \,_{L^p}\langle {\cal A}_\psi (u_\alpha), \pa^\alpha v \rangle_{L^q}.
\end{eqnarray}
One can see that every $L^p$-multiplier operator ${\cal A}_\psi$ with symbol  $\psi\in {\cal M}_p(\R^d)$ is a bounded operator from $W^{-k,p}(\R^d)$ to  $ W^{-k,p}(\R^d)$.

In order to prove the existence of an H-distributions of Theorem \ref{tSP} given below, we need   Tartar's First commutation 
lemma \cite{Tar} and the modification of this lemma given in \cite{NADM} .

\textit{\cite{Tar}: %\label{TarLema}: 
Let $\psi\in C({\bf{S}}^{d-1})$ and $b\in C_0(\R^d)$ define the Fourier multiplier operator ${\cal A}_\psi$ and the operator of multiplication $B$, acting  on $u\in L^2(\R^d)$, as follows: 
$\F({\cal A}_\psi u)(\xi)=\psi\Bigl(\frac{\xi}{|\xi|}\Bigr)\F(u)(\xi)$, $\xi\in\R^d\backslash\{0\}$, and $Bu(x)=b(x)u(x)$, $x\in\R^d$.
Then the operators ${\cal A}_\psi$ and $B$ are bounded on $L^2(\R^d)$, and  their commutator $C:={\cal A}_\psi B-B{\cal A}_\psi$ is a compact operator from $L^2$ into itself.}

 \textit{Moreover, \cite{NADM}:
  If a sequence $(v_n)$ is bounded in both $L^2(\R^d)$ and
$L^r(\R^d)$, for some $r\in(2,\b]$ and $v_n\rightharpoonup 0$ in the sense
of distributions, then the sequence $(Cv_n)$ strongly converges to
zero in $L^q(\R^d)$, for any $q\in [2,r]\backslash\{\b\}$.}

\begin{theorem}\label{prva}
If $u_n \wc 0$ in $L^{p}({\R}^{d})$, and $v_n \wc  0$ in $L^q(\R^{d})$, then there exist subsequences $(u_{n'})$, $(v_{n'})$ and a  distribution $\mu(x,\xi)\in {\cal{SE}}'(\R^{d} \times {\bf{S}}^{d-1})$ of order not more than $\kappa=[d/2]+1$ in $\xi$, such that for every $\varphi_1, \varphi_2\in {\cal S}(\R^d)$ and $\psi\in C^{\kappa}({\bf{S}}^{d-1})$,
\begin{equation}
\begin{array}{cc}\label{Te_AM}
& \displaystyle \lim\limits_{n'\to \infty}\!\int_{\R^{d}}\!{{\cal A}_{\psi}(\varphi_1 u_{n'})(x)}\overline{(\varphi_2 v_{n'})(x)}dx\!
 =\!\lim\limits_{n'\to \infty}\!\int_{\R^{d}}\!(\varphi_1 u_{n'})(x)\overline{{\cal A}_{\overline{\psi}}(\varphi_2 v_{n'})(x)}dx\\
& =: \langle\mu ,\varphi_1\overline\varphi_2\psi \rangle,
\end{array}
\end{equation}
where ${\cal A}_{\psi}:L^p(\R^d)\to L^p(\R^d)$ is a Fourier multiplier operator with the symbol $\psi\in C^\kappa({\bf{S}}^{d-1})$.
\end{theorem}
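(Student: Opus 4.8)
The plan is to construct the distribution $\mu$ as a limit of bilinear functionals indexed by $(\varphi_1,\varphi_2,\psi)$, following the Tartar--G\'erard scheme but working in the anisotropic test space $\SE$ rather than with compactly supported symbols. First I would reduce the problem to a diagonal extraction argument: by the Mihlin estimate \eqref{Mih2}, for each fixed $\psi\in C^\kappa(\Sd)$ the operator ${\cal A}_\psi$ is bounded on $L^p$ with norm controlled by $\|\psi\|_{C^\kappa(\Sd)}$, so the sesquilinear form $(\varphi_1,\varphi_2,\psi)\mapsto \int {\cal A}_\psi(\varphi_1 u_n)\overline{\varphi_2 v_n}$ is bounded by $C\|\varphi_1\|_?\|\varphi_2\|_?\|\psi\|_{C^\kappa}$ uniformly in $n$, where the norms on $\varphi_1,\varphi_2$ are Schwartz seminorms (here the weight $\langle x\rangle^k$ in \eqref{d1} is needed to absorb products $\varphi_1\overline\varphi_2$ without compact support). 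Then, choosing a countable dense subset of $\mathcal S(\R^d)\times\mathcal S(\R^d)\times C^\kappa(\Sd)$ and using boundedness of the scalar sequences together with a Cantor diagonal argument, I extract subsequences $(u_{n'}),(v_{n'})$ along which the limit in \eqref{Te_AM} exists for all triples in the dense set, hence (by the uniform bound and density) for all triples.

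Next I would verify that the resulting limit functional $L(\varphi_1,\varphi_2,\psi)$ is linear in $\varphi_1$, antilinear in $\varphi_2$ and linear in $\psi$, and continuous in each argument with respect to the relevant topologies — linearity is immediate from linearity of ${\cal A}_\psi$ and the integral, continuity from the uniform Mihlin bound. To produce a genuine element of $\SEprim$ I would invoke the tensor-product structure: $\SE$ is the completion of $\mathcal S(\R^d)\hat\otimes\mathcal E(\Sd)$ (Proposition \ref{pr2}), and $\varphi_1\overline\varphi_2\psi$ ranges over a total subset of this tensor product as $\varphi_1,\varphi_2$ range over $\mathcal S$ and $\psi$ over $C^\kappa(\Sd)\supset\mathcal E(\Sd)$; the key point is that the bilinear map $(\varphi_1,\varphi_2)\mapsto\varphi_1\overline\varphi_2$ has dense range in $\mathcal S(\R^d)$ (e.g. via partitions of unity and the fact that any Schwartz function is a finite sum of products, or by a density/Hahn--Banach argument). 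By the Schwartz kernel theorem for these nuclear spaces, the continuous trilinear form $L$ factors through a unique $\mu\in\SEprim$ with $\langle\mu,\varphi_1\overline\varphi_2\psi\rangle = L(\varphi_1,\varphi_2,\psi)$, and the $C^\kappa$-continuity in $\psi$ gives the order-$\kappa$ bound in the $\xi$-variable.

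Finally I would record the symmetry identity: the two limits in \eqref{Te_AM} agree because $\int {\cal A}_\psi(\varphi_1 u_n)\overline{\varphi_2 v_n} = \int \varphi_1 u_n\,\overline{{\cal A}_{\overline\psi}(\varphi_2 v_n)}$ by the defining adjoint relation of the multiplier operator (Plancherel/Parseval on $L^2$, extended to the $L^p$--$L^q$ pairing), which holds exactly, not merely in the limit.

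I expect the main obstacle to be the non-local part: proving the uniform bound on the trilinear form with \emph{Schwartz} seminorms on $\varphi_1,\varphi_2$ instead of sup-norms of compactly supported functions, and correspondingly checking that $L$ extends continuously to all of $\SE$ rather than to a space of compactly-supported-in-$x$ symbols. This is precisely where the construction departs from \cite{NADM}: one must control $\|\varphi_1\overline\varphi_2\|$ in the $\langle x\rangle^k$-weighted norms of \eqref{d1}, handle the behavior of ${\cal A}_\psi(\varphi_1 u_n)$ at spatial infinity (using that $\varphi_1 u_n$ is still only weakly convergent in $L^p$, so some care with the commutator-type estimates and the boundedness — not compactness — of ${\cal A}_\psi$ on $L^p$ is required), and verify that the tensor-product/kernel-theorem machinery applies to $\SE$ as claimed in Proposition \ref{pr2}. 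The commutation lemma of Tartar as modified in \cite{NADM} is not needed for mere existence here (it enters in Theorem \ref{tSP}); existence in Theorem \ref{prva} rests only on uniform boundedness plus diagonal extraction plus the kernel theorem.
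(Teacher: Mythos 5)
Your outline reproduces the boundedness--diagonalization--kernel-theorem skeleton of the paper's proof, and the adjoint identity for the first equality in \eqref{Te_AM} is handled correctly. But your closing claim --- that ``the commutation lemma of Tartar as modified in \cite{NADM} is not needed for mere existence here'' --- is wrong, and it hides the one step where your argument actually breaks. Your limit functional $L(\varphi_1,\varphi_2,\psi)=\lim_{n'}\int {\cal A}_\psi(\varphi_1 u_{n'})\overline{\varphi_2 v_{n'}}\,dx$ is a priori a separately continuous form in \emph{two} independent $x$-test functions and one $\xi$-test function; the kernel theorem applied to it produces a distribution on $\R^d\times\R^d\times\Sd$, not an element of $\SEprim$. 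To ``factor through a unique $\mu\in\SEprim$ with $\langle\mu,\varphi_1\overline\varphi_2\psi\rangle=L(\varphi_1,\varphi_2,\psi)$'' you must first prove that $L$ depends on $(\varphi_1,\varphi_2)$ only through the product $\varphi_1\overline{\varphi_2}$, i.e.\ that $\varphi_1\overline{\varphi_2}=\tilde\varphi_1\overline{\tilde\varphi_2}$ forces equal limits. Writing $\int {\cal A}_\psi(\varphi_1 u_n)\overline{\varphi_2 v_n}=\int u_n\,\overline{\overline{\varphi_1}\,{\cal A}_{\overline\psi}(\varphi_2 v_n)}$, the issue is whether one may replace $\overline{\varphi_1}\,{\cal A}_{\overline\psi}(\varphi_2 v_n)$ by ${\cal A}_{\overline\psi}(\overline{\varphi_1}\varphi_2 v_n)$ in the limit. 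The error term is the commutator $[\overline{\varphi_1},{\cal A}_{\overline\psi}](\varphi_2 v_n)$ paired against $u_n$; since \emph{both} sequences converge only weakly, boundedness of the commutator is useless --- one needs it to send the weakly null sequence $\varphi_2 v_n$ to a \emph{strongly} null sequence in $L^q$. That is exactly Tartar's first commutation lemma in the $L^2$--$L^r$ form of \cite{NADM}, and it is precisely how the paper passes from its kernel-theorem object $\langle\mu,\vp\psi\rangle=\lim\int u_k\overline{{\cal A}_{\overline\psi}(\vp v_k)}$ (one $x$-argument, no diagonal problem) to the split form \eqref{n1}, using the factorization $\vp=\overline{\varphi_1}\varphi_2$ from \cite{SP}.

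Two further points you gloss over. First, applying the \cite{NADM} commutation lemma requires the sequence fed to the commutator to be bounded in $L^2$ as well as in $L^q$; the paper secures this by noting $\|\varphi_2 v_k\|_{L^2}\le\|v_k\|_{L^q}\|\varphi_2\|_{L^{2q/(q-2)}}$ when $1<p\le 2$, and must run the argument symmetrically (commuting past $\varphi_1 u_n$ instead) when $p>2$. Your proposal makes no case distinction, and neither version of the lemma applies uniformly for all $p\in(1,\infty)$ without it. Second, the uniform bound is simply $|\mu_n(\vp,\psi)|\le c\,\|\psi\|_{C^\kappa(\Sd)}\|\vp\|_{L^\infty}$; no weighted Schwartz seminorms are needed for the estimate itself (the $\langle x\rangle^k$ weights matter only for identifying the completed tensor product $\SE$ in Proposition \ref{pr2}), so the ``main obstacle'' you anticipate is not where the difficulty lies.
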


By the order of $\mu\in\SE$ we mean that for any $\varphi\in {\cal S} (\R^d)$,
$\langle \mu(x,\xi), \varphi(x) \psi(\xi)\rangle$ can be extended on 
$C^\kappa(\textbf{S}^{d-1})$
(see (\ref{zvez}) and (\ref{zvezzvez}) below).

\begin{proof}
First, notice that the Fourier multiplier operator ${\cal A}_\psi$ with $\psi\in C^\kappa({\bf{S}}^{d-1})$
 is well defined on both 
 $\varphi_1 u_{n}\in L^p(\R^d)$ and 
 $\varphi_2 v_{n}\in L^q(\R^d)$,
 and that the adjoint operator of ${\cal A}_{\psi}$ is ${\cal A}_{\overline\psi}$. Thus, the first equality in (\ref{Te_AM}) holds.

Let $1<p\leq 2$. Consider a sequence of sesquilinear (linear in $\psi\in C^{\kappa}(\mathbf{S}^{d-1})$ and  anti-linear in $\vp\in\cal S(\R^d)$) functionals 
\be\label{pre12}
\mu_n (\vp,\psi)= \int_{\R^d} u_n \overline{{\cal A}_{\overline{\psi}}(\vp v_n)}dx.
\ee
By the continuity of $\cal A_\psi$ and the boundedness of $(u_n)$ and $(v_n)$ in $L^p(\R^d)$ and $L^q(\R^d)$, it follows that
 there exists $c>0$,  such that for every $n\in{\N}$,
\be\label{00}
|\mu_n (\vp,\psi)|\leq \|u_n\|_{L^p} \|{\cal A}_{\overline{\psi}}(\vp v_n)\|_{L^q}\leq c\,\|\psi\|_{C^\kappa({\bf{S}}^{d-1})} \|\vp\|_{L^\b}.
\ee
Fix $\vp\in{\cal S}(\R^{d})$ and denote by $(B_n \vp)$  the sequence of functions defined on $C^\kappa({\bf{S}}^{d-1})$  by
\be\label{broj}
\langle B_n \vp,\cdot\rangle= \mu_n (\vp, \cdot ).
\ee
For every $n\in {\N},$  the linearity of $B_n \vp$ is clear and the continuity follows from (\ref{00}):
\be\label{ok}
|\langle B_n \vp,\psi\rangle | \leq c_\vp\,\|\psi\|_{C^\kappa({\bf{S}}^{d-1})},  \mbox{ where } c_\vp=c||\vp||_{L^\infty}.
\ee
If we  fix $\psi \in C^\kappa({\bf{S}}^{d-1})$, then  (\ref{pre12}) implies that  the mapping 
$ {\cal S} (\R^d) \to \C$, $\vp\mapsto\langle B_n\vp,\psi\rangle$ is anti-linear and, again by (\ref{00}), continuous. 

We continue with fixed $\vp$ and apply the Sequential Banach Alaoglu theorem to obtain weakly star convergent subsequence
$(B_k \vp)$ in $ (C^\kappa({\bf{S}}^{d-1}))'$. We denote the weak star limit of $B_k \vp$ by $B\vp$, i.e. for every $\psi \in C^\kappa({\bf{S}}^{d-1})$,
$$
\langle B\vp, \psi\rangle = \lim_{k\to\b} \langle B_k \vp, \psi \rangle .
$$
We are going to show that $B$ can be defined on the whole $\cal S(\R^d)$, so that 
$\cal S(\R^d)\ni\vp\mapsto B\vp \in (C^\kappa ({\bf{S}}^{d-1}))'$ is linear and continuous.

By the diagonalization argument, we define $B$  
on a countable dense set $M=\{ \vp_m |\, m\in {\N} \}\subset \cal S(\R^d)$.
For that purpose extract a subsequence $(B_{1,k})_{k} \subset (B_{n})_{n}$ such that
 $(B_{1,k} \vp_1)$ is weakly star convergent in $(C^\kappa({\bf{S}}^{d-1}))'$ and denote the limit as $B\vp_1$. Then extract a subsequence $(B_{2,k})_{k} \subset (B_{1,k})_{k} $ such that $(B_{2,k} \vp_2)$ is weakly star convergent in $(C^\kappa({\bf{S}}^{d-1}))'$ and denote the limit as
 $B\vp_2$. Notice  also that
  $B_{2,k} \vp_1$ converges weakly star to $B\vp_1.$ 
  Repeating this procedure (extracting subsequences for all $\vp_m\in M$), we obtain diagonal (sub)sequence $B_{k,k}\in\cal L
\left(
\cal S(\R^d), 
\left(C^\kappa
({\bf{S}}^{d-1})\right)'
\right)$,  such that for all $\vp_m\in M$
$$
\langle B\vp_m, \psi\rangle = \lim_{k\to\b} \langle B_{k,k} \vp_m, \psi \rangle, \quad \psi \in C^\kappa({\bf{S}}^{d-1}).
$$
Denote $B_{k,k} =:b_{k}$ %, $k\in{\N}$ 
and 
fix $\psi\in C^\kappa ({\bf{S}}^{d-1})$. By (\ref{broj}),   $\vp\mapsto \langle b_k\vp,\psi\rangle,$ is a pointwise bounded sequence in $\cal S'(\R^d)$ which converges
on a dense set $M\subset\cal S(\R^d)$. By the Banach-Steinhaus theorem, see e.g. \cite[p. 169]{Kothe}, $\langle b_k (\cdot),\psi\rangle$ converges to $\langle B(\cdot),\psi\rangle$ on $\cal S(\R^d)$.
In this way we show that for every $\vp\in\cal S(\R^d)$ and every  $\psi\in C^\kappa({\bf{S}}^{d-1})$
$$
\lim_{k\rightarrow \infty}\langle b_k \vp, \psi\rangle =\langle B\vp, \psi\rangle .
$$
Moreover, by (\ref{broj}),
\begin{equation}\label{zvez}
|\langle B\vp,\psi\rangle|\leq c||\vp||_{L^\infty}||\psi||_{C^\kappa({\bf{S}}^{d-1})}.
\end{equation}
By  \cite[Part III, Chap. 50, Proposition 50.7, p. 524]{TrevesTVS} (it is a version of the Schwartz kernel theorem)  we have that there exists 
$\mu\in \cal{SE}'(\R^d\times{\bf{S}}^{d-1})$ defined as

\begin{equation}\label{zvezzvez}
\langle \mu(x,\xi), \vp(x)\psi(\xi)\rangle=\lim_{k\to\b} \langle b_k\vp,\psi\rangle = \lim_{k\to\b} \int u_{k} \overline{{\cal A}_{\overline{\psi}}(\vp v_{k})}dx, 
\end{equation}
for all $\vp\in{\cal S}(\R^{d}),\;\psi \in C^\kappa({\bf{S}}^{d-1}),$ where 
$(u_k)$ is a subsequence of $(u_n)$ and $(v_k)$ is a subsequence of $(v_n)$   corresponding to $(b_k)$. Now, we will use the factorization property of $\cal S(\R^d)$, \cite{SP}: Every $\vp\in \cal S(\R^d)$ can be written as $\vp=\overline\vp_1 \vp_2$, for some $\vp_1,\vp_2\in \cal S(\R^d)$. Then
$$
\langle \mu, \vp\psi\rangle = \lim_{k\to\b} \int u_{k} \overline{{\cal A}_{\overline{\psi}}(\overline{\vp}_1 \vp_2 v_{k})}dx. %, \quad \vp\in{\cal S}(\R^{d}),\;\psi \in C({\bf{S}}^{d-1}).
$$
Since $\| \vp_2v_k\|_{L^2}\leq \| v_k\|_{L^q} \|\vp_2\|_{L^{\frac{2q}{q-2}}}$, we can apply the commutation lemma  to $\vp_2v_k\in L^2\cap L^q$ and $\overline\vp_1\in {\cal S}(\R^d) \subset C_0(\R^d)$
to obtain that  for every $\vp_1,\vp_2\in \cal S(\R^d)$ and $\psi\in C^\kappa({\bf{S}}^{d-1})$,
\begin{equation}\label{n1}
\langle \mu, \overline{\vp}_1\vp_2\psi\rangle = \lim_{k\to\b} \int_{\R^d} \vp_1 u_{k}  \overline{ {\cal A}_{\overline{\psi}}(\vp_2 v_{k})} dx .
\end{equation}
This completes the proof of Theorem \ref{Te_AM} for $1<p\leq 2$.

In the case when $p> 2$, we define 
$$
\mu_{n}(\vp,\psi):=
 \int_{\R^{d}}   {\cal A}_{\psi} (\vp u_{n})  \overline{ v}_{n}  dx.
$$
Then, in the same way as above, but now with the change of the roles of $(u_n)$ and $(v_n)$, we use factorization $\vp=\vp_1\overline{\vp}_2$,  then the commutation lemma on $\vp_1 u_{n}\in L^2(\R^d)$  
and apply the preceding proof.
\end{proof}

\textbf{Remark:}

 The formulation  of the previous theorem can be slightly changed in the case when $p\in(1,2)$. Then, instead of $v_n\wc 0$ in $L^q$ we can assume that  $v_n \wc 0$ in $L^r$ for some $r\geq q$ and obtain the same result as in Theorem \ref{prva}. In that case for every $\ph\in {\cal S}(\R^d)$, $\ph v_n \in L^q\cap L^2$ and the same proof can be applied.  
The same idea but with compactly supported $\ph$  was used in \cite{NADM}.

\section{H-distribution and Sobolev spaces} 

The next theorem determines H-distributions associated to sequences in  Sobolev space.

\begin{theorem}\label{tSP}
%Let $p\in(1,\b)$ and $q=\frac p{p-1}$.
If a sequence $u_n  \wc  0$ weakly  in $ W^{-k,p}(\R^d)$ and $v_n \wc 0$ weakly in $W^{k,q}(\R^d)$,  then there exist subsequences $(u_{n'}), (v_{n'})$ and a distribution $\mu\in \cal{SE}'(\R^d\times{\bf{S}}^{d-1})$ such that for every $\ph_1,\ph_2 \in {\cal S} (\R^d)$
%$\ph_2\in C_c^\b(\R^d)$ 
and every $\psi\in C^{\kappa}({\bf{S}}^{d-1})$,
\begin{equation}\label{3}
\lim_{n'\to\b} \langle {\cal A}_\psi (\ph_1 u_{n'})\, , \, {\ph_2 v_{n'}} \rangle  =\lim_{n'\to\b} \langle \ph_1 u_{n'}\, , \, {{\cal A}_{\overline{\psi}} (\ph_2 v_{n'})} \rangle =\langle \mu,\ph_1\bar\ph_2 \psi\rangle.
\end{equation}
\end{theorem}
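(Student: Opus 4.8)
The plan is to deduce Theorem \ref{tSP} from Theorem \ref{prva} by splitting $u_n$ into its $L^p$-components. Since weakly convergent sequences are bounded, set $M:=\sup_n\|u_n\|_{W^{-k,p}}<\b$ and $\sup_n\|v_n\|_{W^{k,q}}<\b$. Using the description of $W^{-k,p}(\R^d)$ from Subsection \ref{ds}, for each $n$ I would choose $u_{n,\alpha}\in L^p(\R^d)$, $|\alpha|\le k$, with $u_n=\sum_{|\alpha|\le k}\pa^\alpha u_{n,\alpha}$ and $\sum_{|\alpha|\le k}\|u_{n,\alpha}\|_p^p\le M^p+1$; then each $(u_{n,\alpha})_n$ is bounded in $L^p(\R^d)$, and since $1<p<\b$, $L^p(\R^d)$ is reflexive, so after a subsequence (chosen diagonally over the finitely many $\alpha$) one has $u_{n,\alpha}\wc u_\alpha$ in $L^p(\R^d)$. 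For $w\in W^{k,q}(\R^d)$, $\langle u_n,w\rangle=\sum_{|\alpha|\le k}(-1)^{|\alpha|}\langle u_{n,\alpha},\pa^\alpha w\rangle\to\sum_{|\alpha|\le k}(-1)^{|\alpha|}\langle u_\alpha,\pa^\alpha w\rangle$, which vanishes because $u_n\wc0$; hence $\sum_{|\alpha|\le k}\pa^\alpha u_\alpha=0$, and after replacing $u_{n,\alpha}$ by $u_{n,\alpha}-u_\alpha$ I may assume $u_{n,\alpha}\wc0$ in $L^p(\R^d)$ while still $u_n=\sum_{|\alpha|\le k}\pa^\alpha u_{n,\alpha}$. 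Also, $v_n\wc0$ in $W^{k,q}(\R^d)$ yields $\pa^\delta v_n\wc0$ in $L^q(\R^d)$, with $(\pa^\delta v_n)_n$ bounded in $L^q(\R^d)$, for all $|\delta|\le k$.

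Next I would rewrite the pairing in (\ref{3}). Fix $\ph_1,\ph_2\in{\cal S}(\R^d)$ and $\psi\in C^\kappa(\Sd)$. Multiplication by a Schwartz function maps $W^{-k,p}(\R^d)$ and $W^{k,q}(\R^d)$ continuously into themselves, and by the Mihlin theorem (with the remark that $A$ may be replaced by $\|\psi\|_{C^\kappa(\Sd)}$) the operator ${\cal A}_{\bar\psi}$ is bounded on $L^q(\R^d)$ and commutes with $\pa^\beta$; hence ${\cal A}_{\bar\psi}(\ph_2 v_n)\in W^{k,q}(\R^d)$, the pairings in (\ref{3}) make sense, and the first equality in (\ref{3}) is the adjoint relation ${\cal A}_\psi^\ast={\cal A}_{\bar\psi}$. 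Using the duality definition of ${\cal A}_\psi$ on Sobolev spaces, $u_n=\sum_{|\alpha|\le k}\pa^\alpha u_{n,\alpha}$, integration by parts, two applications of the Leibniz rule, and $\pa^\gamma{\cal A}_{\bar\psi}={\cal A}_{\bar\psi}\pa^\gamma$, one arrives at the finite sum
\[
\langle{\cal A}_\psi(\ph_1 u_n),\ph_2 v_n\rangle=\sum_{|\alpha|\le k}\sum_{\delta\le\gamma\le\alpha}(-1)^{|\alpha|}\binom{\alpha}{\gamma}\binom{\gamma}{\delta}\int_{\R^d}(\pa^{\alpha-\gamma}\ph_1)\,u_{n,\alpha}\,\overline{{\cal A}_{\bar\psi}\bigl((\pa^{\gamma-\delta}\ph_2)\,\pa^\delta v_n\bigr)}\,dx,
\]
where every integral converges absolutely because the Schwartz factors place $(\pa^{\alpha-\gamma}\ph_1)u_{n,\alpha}\in L^p(\R^d)$ and $(\pa^{\gamma-\delta}\ph_2)\pa^\delta v_n\in L^q(\R^d)$, exactly as in the proof of Theorem \ref{prva}.

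Then I would apply Theorem \ref{prva} to each of the finitely many pairs of weakly null sequences $\bigl((u_{n,\alpha})_n,(\pa^\delta v_n)_n\bigr)$, $|\alpha|,|\delta|\le k$, in $L^p(\R^d)$ and $L^q(\R^d)$, respectively, and extract nested subsequences down to one common subsequence, still denoted $(n')$, together with distributions $\mu_{\alpha,\delta}\in\SEprim$ of order $\le\kappa$ in $\xi$ such that $\lim_{n'}\int_{\R^d}a\,u_{n',\alpha}\,\overline{{\cal A}_{\bar\psi}(b\,\pa^\delta v_{n'})}\,dx=\langle\mu_{\alpha,\delta},a\,\bar b\,\psi\rangle$ for all $a,b\in{\cal S}(\R^d)$; here $\mu_{\alpha,\delta}$ does not depend on $\gamma$. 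Substituting into the displayed identity shows that the limit in (\ref{3}) exists along $(n')$ and equals
\[
\sum_{|\alpha|\le k}\sum_{\delta\le\gamma\le\alpha}(-1)^{|\alpha|}\binom{\alpha}{\gamma}\binom{\gamma}{\delta}\,\bigl\langle\mu_{\alpha,\delta},\,(\pa^{\alpha-\gamma}\ph_1)\,\overline{(\pa^{\gamma-\delta}\ph_2)}\,\psi\bigr\rangle .
\]

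It remains to collect this into a single $\mu$. The key is that, by $\binom{\alpha}{\gamma}\binom{\gamma}{\delta}=\binom{\alpha}{\delta}\binom{\alpha-\delta}{\gamma-\delta}$ and the Leibniz rule, $\sum_{\delta\le\gamma\le\alpha}\binom{\alpha}{\gamma}\binom{\gamma}{\delta}(\pa^{\alpha-\gamma}\ph_1)\overline{(\pa^{\gamma-\delta}\ph_2)}=\binom{\alpha}{\delta}\,\pa_x^{\alpha-\delta}(\ph_1\overline{\ph_2})$; since $\mu_{\alpha,\delta}$ is independent of $\gamma$, the sum above becomes $\sum_{|\alpha|\le k}\sum_{\delta\le\alpha}(-1)^{|\alpha|}\binom{\alpha}{\delta}\langle\mu_{\alpha,\delta},[\pa_x^{\alpha-\delta}(\ph_1\overline{\ph_2})]\psi\rangle=\langle\mu,\ph_1\overline{\ph_2}\psi\rangle$, where $\mu:=\sum_{|\alpha|\le k}\sum_{\delta\le\alpha}(-1)^{|\delta|}\binom{\alpha}{\delta}\,\pa_x^{\alpha-\delta}\mu_{\alpha,\delta}$ is a finite linear combination of derivatives in the variable $x$ of the $\mu_{\alpha,\delta}$. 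Because $\pa_x$ maps $\SE$ continuously into itself (differentiation in $x$ only raises the differentiation order in the seminorms (\ref{d1}), not the weight), $\mu\in\SEprim$, and it has order $\le\kappa$ in $\xi$. By the factorization property of ${\cal S}(\R^d)$ already used in Theorem \ref{prva}, every test function is of the form $\ph_1\overline{\ph_2}$, so $\mu$ represents the limit for all $\ph_1,\ph_2,\psi$, giving (\ref{3}). I expect the main obstacle to be precisely this last identification: a priori the contributions $\langle\mu_{\alpha,\delta},(\pa^{\alpha-\gamma}\ph_1)\overline{(\pa^{\gamma-\delta}\ph_2)}\psi\rangle$ depend on derivatives of $\ph_1$ and $\ph_2$ separately rather than on the product $\ph_1\overline{\ph_2}$, and it is the combinatorial collapse that reconciles them; by contrast, arranging the decomposition so that $u_{n,\alpha}\wc0$, and the absolute convergence of the integrals, are routine.
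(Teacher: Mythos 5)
Your proposal is correct and follows essentially the same route as the paper: decompose $u_n=\sum_{|\alpha|\le k}\pa^\alpha u_{n,\alpha}$ with $u_{n,\alpha}\wc 0$ in $L^p(\R^d)$, expand $\langle {\cal A}_\psi(\ph_1 u_n),\ph_2 v_n\rangle$ by two applications of the Leibniz rule, apply Theorem \ref{prva} to each pair $(u_{n,\alpha},\pa^\delta v_n)$ along a common nested subsequence, and assemble the resulting finite family $\mu_{\alpha,\delta}$ into one $\mu$. Your final combinatorial collapse, identifying $\mu$ as $\sum(-1)^{|\delta|}\binom{\alpha}{\delta}\pa_x^{\alpha-\delta}\mu_{\alpha,\delta}$ and thereby showing the limit depends only on the product $\ph_1\overline{\ph}_2$ rather than on the factorization, is a small but welcome refinement of the paper's definition, which states the formula in terms of $\pa^{\alpha-\beta}\ph_1$ and $\pa^{\beta-\gamma}\overline\ph_2$ separately.
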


\begin{proof}
Since $u_n \wc 0$ in $W^{-k,p}(\R^d)$, there exist a subsequence $u_{n'} \wc 0$ such that  $u_{n'}=\ds\sum_{|\alpha| \leq k}   \pa^\alpha g_{\alpha, n'},$ 
where for every $|\alpha|\leq k$,  $(g_{\alpha, n'})$ is a sequence of $L^p$-functions such that  
$g_{\alpha,n'} \wc 0$ in $L^p(\R^d)$. 
Indeed, since a weakly convergent sequence forms a bounded set in $W^{-k,p}(\R^d),$ using the same proof of the representation theorem for elements of $W^{-k,p}(\R^d),$ one can
 obtain the existence of bounded sets $\{F_{\alpha,n}, n\in{\N}\}$, $|\alpha|\leq k$, such that $u_{n}=\ds\sum_{|\alpha| \leq k}   \pa^\alpha F_{\alpha, n}.$ 
Now, since $\{F_{\alpha,n}, n\in{\N}\}$ are bounded in $L^p(\R^d)$, these sets are weakly precompact and every $\{F_{\alpha,n}, n\in{\N}\}$ has a weakly convergent subsequence. By the diagonalization method one can find a subsequence such that
   $F_{\alpha,n'}\wc f_\alpha\in L^p(\R^d)$, $n'\to \b$, $|\alpha|\leq k$, in $L^p(\R^d)$. 
Since 
$\ds\sum_{|\alpha| \leq k}   \pa^\alpha F_{\alpha, n'} \wc 0$, it follows that   $\ds\sum_{|\alpha|\leq k} \pa^\alpha f_\alpha=0$. Thus we obtain required subsequence $u_{n'}= \ds\sum_{|\alpha|\leq k} \pa^\alpha (F_{\alpha,n'}-f_\alpha)$. 
In the sequel we will not relabel subsequences, so we will use $u_n$ instead of $u_{n'}$.

Since  
$$
\pa^\alpha_x \Big[{\cal A}_\psi(u)\Big] = {\cal A}_{\psi_\alpha} (u) = {\cal A}_{\psi} (\pa^\alpha u), \mbox{  for } {\psi_\alpha}(\xi)= (2\pi i)^{|\alpha|} \xi^\alpha \psi(\xi),
$$
we have that
$$
{\cal A}_\psi \left( \ph_1 \, \pa^\alpha F_{\alpha, n} \right)
= (-1)^{|\alpha|}\sum_{0\leq\beta\leq\alpha} (-1)^{|\beta|} \left(\begin{array}{c} \alpha\\ \beta
\end{array}
\right) 
 \, \pa^\beta \! \left[ {\cal A}_\psi \left(\, F_{\alpha, n}\,  \pa^{\alpha-\beta}\ph_1 \right) \right],
$$ 
and so
\begin{equation}
\begin{array}{cc}
& \left\langle {\cal A}_\psi (\ph_1 u_{n})\, , \, {\ph_2} {v_{n}} \right\rangle = \sum_{|\alpha|\leq k}
           (-1)^{|\alpha|} \sum_{0\leq\beta\leq\alpha} 
           \left(\begin{array}{c} \alpha \\ \beta \end{array} \right) 
           \left\langle {\cal A}_\psi \left( F_{\alpha, n}\, \pa^{\alpha-\beta} \ph_1  \right)\, , \,
             \pa^{\beta}\![{\ph_2} {v_{n}}] \right\rangle \\
           & \ds \qquad = \sum_{|\alpha|\leq k} (-1)^{|\alpha|} \sum_{0\leq\beta\leq\alpha} \left(\begin{array}{c} \alpha \\ \beta \end{array} \right)  \sum_{0\leq \gamma \leq \beta}  \left(\begin{array}{c} \beta \\ \gamma \end{array} \right) \left\langle {\cal A}_\psi
             \left( F_{\alpha, n} \, \pa^{\alpha-\beta} \ph_1 \right)\, , \,  \pa^{\beta - \gamma} \ph_2 \,
            \pa^\gamma  v_{n}  \right\rangle.
\end{array}
\end{equation}

For the moment, we fix $\alpha$ and  apply  Theorem \ref{prva} to $F_{\alpha,n} \wc 0$ in $L^p(\R^d)$ and $v_{n}  \wc   0$  in ${L^q}(\R^d)$, thus obtaining subsequences $(F_{\alpha, n_0})_{n_0}$, $(v_{\alpha, n_0})_{n_0}$ and an H-distribution
$\mu_{\alpha, 0}\in {\cal{SE}}'(\R^d \times {\bf{S}}^{d-1}),$ such that 
$$
\left\langle \, \mu_{\alpha, 0}\,,\, \varphi_1 \overline{\varphi}_2 \psi \right\rangle := \lim_{n_0\to\b} \left\langle {\cal A}_\psi \left( \varphi_1 F_{\alpha, n_0} \right), \,{ \varphi_2} \, {v_{\alpha, n_0}} \right\rangle.
 $$
%$\varphi_1,\vp_2\in {\cal S}(\R^d)$,  $\psi \in C^\kappa({\bf{S}}^{d-1}). $ 
Then, applying Theorem \ref{prva}  to $F_{\alpha, n_0} \wc 0$ in $L^p(\R^d)$,  and $\pa^{(1,0,...,0)} v_{\alpha, n_0}  \wc  0$ 
 in ${L^q}(\R^d)$,
we obtain subsequences $(F_{\alpha, n_{(1,0,...,0)}})_{n_{(1,0,...,0)}}$, $(v_{\alpha, n_{(1,0,...,0)} })_{n_{(1,0,...,0)}}$ and an H-distribution
$\mu_{\alpha, (1,0,...,0)}$ $\in {\cal{SE}}'(\R^d \times {\bf{S}}^{d-1}).$  Thus, we obtain finitely many H-distributions  $\mu_{\alpha,\gamma}$, $0\leq \gamma \leq \alpha$,
such that
$$
\left\langle \, \mu_{\alpha,\gamma}\,,\, \varphi_1 \overline{\varphi}_2 \psi \right\rangle := \lim_{n_\gamma\to\b} \left\langle {\cal A}_\psi \left( \varphi_1 F_{\alpha, n_\gamma} \right) , \, { \varphi_2} \, { \pa^\gamma v_{\alpha, n_\gamma}} \right\rangle.
$$
 %$\varphi_1,\vp_2\in {\cal S}(\R^d)$,  $\psi \in C^\kappa({\bf{S}}^{d-1}). $ 
The last one $\mu_{\alpha, \alpha}$ is obtained together with subsequences $(F_{\alpha, n_\alpha})_{n_\alpha}$, $(v_{\alpha, n_\alpha})_{n_\alpha}$ which we are going to use to  define H-distribution $\mu^\alpha$ in the following way: For  $\varphi_1, \varphi_2\in {\cal S}(\R^d),$ $\psi \in C^\kappa({\bf{S}}^{d-1}),$ 
$$
             \left\langle \, \mu^\alpha \,,\, \ph_1 \overline{\ph}_2 \psi \right\rangle :=  (-1)^{|\alpha|} \sum_{0\leq\beta\leq\alpha} \left(\begin{array}{c} \alpha \\ \beta \end{array} \right)  \sum_{0\leq \gamma \leq \beta}  \left(\begin{array}{c} \beta \\ \gamma \end{array} \right)  \left\langle \mu_{\alpha, \gamma} \, , \,  \pa^{\alpha-\beta}\ph_1\,  \pa^{\beta-\gamma}\bar{\ph}_2 \, \psi
                 \right\rangle.
$$
The sum on the right hand side is finite and all H-distributions $\mu_{\alpha,\gamma}$ can be defined via $(F_{\alpha, n_\alpha})_{n_\alpha}$ which is subsequence of $ F_{\alpha, n_\gamma}$ and $(v_{\alpha, n_\alpha})_{n_\alpha}$ which is subsequence of $(v_{\alpha, n_\gamma})_{n_\gamma}$,  so the H-distribution $\mu^\alpha$ is well-defined.

Let us emphasize that we have obtained $\mu^\alpha$ for a fixed $\alpha$. Now if we take first $\alpha=0$ with previous procedure we can obtain H-distribution $\mu^0$ defined via $(F_{0,n_0})_{n_0}$ and $(v_{n_0})_{n_0}$. Then, starting with $(F_{e_1,n_0})_{n_0}$ and $(v_{e_1,n_0})_{n_0}$ we obtain (by the same procedure)  H-distribution $\mu^{e_1}$ defined via $(F_{e_1,n_{e_1}})_{n_{e_1}}$ and $(v_{e_1, n_{e_1}})_{n_{e_1}}$. Here $e_1=(1,0,...,0)$. Then we proceed  with $e_2=(0,1,0,...,0)$ to obtain H-distribution $\mu^{e_2}$ and so on with all $|\alpha|\leq k$. 
%The reason why we apply this procedure step by step is to have at the end one subsequence of $v_n$.

At the end we obtain H-distribution $\mu$ defined by
$$
             \left\langle \, \mu  \,,\, \ph_1 \overline{\ph}_2 \psi \right\rangle :=  \sum_{|\alpha|\leq k} (-1)^{|\alpha|} \sum_{0\leq\beta\leq\alpha} \left(\begin{array}{c} \alpha \\ \beta \end{array} \right)  \sum_{0\leq \gamma \leq \beta}  \left(\begin{array}{c} \beta \\ \gamma \end{array} \right)  \left\langle \mu_{\alpha, \gamma} \, , \,  \pa^{\alpha-\beta}\ph_1\,  \pa^{\beta-\gamma}\bar{\ph}_2 \, \psi
                 \right\rangle.
$$
and subsequences $\left( \ds\sum_{|\alpha|\leq k} \pa^\alpha F_{\alpha, n_{(0,...,0,k)}}\right)_{n_{(0,...,0,k)}}$ and $\left( \ds v_{n'}\equiv v_{(0,...,0,k), n_{(0,...,0,k)}}\right)_{n_{(0,...,0,k)}}$.
\end{proof}

Distribution $\mu$ obtained in Theorem \ref{tSP}  is called  {\em   $H$-distribution} corresponding to the (sub)sequence $(u_{n'},v_{n'})$. 

$\,$

Assume that the distributions $\mu$ determined by Theorem \ref{tSP} are equal to zero. Then the local strong convergence in $W^{-k,p}(\R^d)$ easily follows. We will prove a more delicate assertion in the next theorem.

\begin{theorem}\label{th7}
 Let %$1<p<\b$ and   
 $u_n \wc 0$ in $W^{-k,p}(\R^d)$. If for every sequence $v_n \wc 0$ in $W^{k,q}(\R^d)$ 
 %, $q=\frac p{p-1}$,  weakly convergent to zero, 
 the corresponding H-distribution is zero, then for every $\theta\in {\cal S} (\R^d)$, $\theta u_n \to 0$ strongly in $W^{-k,p}(\R^d)$, $n\to\b$.
\end{theorem}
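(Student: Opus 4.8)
The plan is to argue by contradiction: suppose $\theta u_n$ does not converge strongly to zero in $W^{-k,p}(\R^d)$ for some fixed $\theta\in\mathcal S(\R^d)$. Passing to a subsequence (not relabeled), we may assume $\|\theta u_n\|_{W^{-k,p}}\geq\delta>0$ for all $n$. Since $W^{-k,p}$ is the dual of the separable reflexive space $W^{k,q}$, the norm of $\theta u_n$ is realized by testing against the unit ball of $W^{k,q}$: choose $w_n\in W^{k,q}(\R^d)$ with $\|w_n\|_{W^{k,q}}\le 1$ and $|\langle\theta u_n,w_n\rangle|\ge\delta/2$. The first step is to upgrade this to a \emph{weakly convergent} test sequence. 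The sequence $(w_n)$ is bounded in the reflexive space $W^{k,q}$, hence after a further subsequence $w_n\wc w$ in $W^{k,q}$. Here one must be careful: if $w\ne 0$ the pairing $\langle\theta u_n,w_n\rangle$ need not tend to zero merely from $u_n\wc 0$, because $\theta u_n\wc 0$ only in $W^{-k,p}$ while $w_n$ does not converge strongly. However, $\theta u_n\wc 0$ in $W^{-k,p}$ gives $\langle \theta u_n, w\rangle\to 0$, so replacing $w_n$ by $v_n:=w_n-w$ we get $v_n\wc 0$ in $W^{k,q}$ with $v_n$ bounded and $\limsup_n|\langle\theta u_n,v_n\rangle|\ge\delta/2$.

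Next I would connect this pairing with an H-distribution. Write $u_n=\sum_{|\alpha|\le k}\partial^\alpha F_{\alpha,n}$ with $F_{\alpha,n}\wc f_\alpha$ in $L^p$ as in the proof of Theorem~\ref{tSP}, and after subtracting $\sum\partial^\alpha f_\alpha=0$ assume $F_{\alpha,n}\wc 0$ in $L^p$. Expanding $\langle\theta u_n,v_n\rangle$ by the Leibniz rule as in Theorem~\ref{tSP} reduces the pairing to a finite sum of terms of the form $\int_{\R^d}(\theta_\alpha F_{\alpha,n})\,\overline{\partial^\gamma v_n}\,dx$ for suitable derivatives $\theta_\alpha$ of $\theta$ and $\partial^\gamma v_n\wc 0$ in $L^q$. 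The point now is to insert Fourier multipliers. Using a smooth partition of unity $1=\sum_j \psi_j(\xi/|\xi|)$ (or, more efficiently, the representation of the identity operator on $L^p$ coming from $L^p$-boundedness of a family of multipliers) together with the First Commutation Lemma, one shows that if every H-distribution $\langle\mu_{\alpha,\gamma},\varphi_1\bar\varphi_2\psi\rangle=\lim\int \mathcal A_\psi(\varphi_1 F_{\alpha,n})\overline{\varphi_2\partial^\gamma v_n}$ vanishes for all $\psi\in C^\kappa(\Sd)$ and all $\varphi_1,\varphi_2$, then in fact $\int(\theta_\alpha F_{\alpha,n})\overline{\partial^\gamma v_n}\to 0$; the commutation lemma lets one move the cutoff $\theta_\alpha$ past $\mathcal A_\psi$ modulo a term that converges strongly, and the vanishing of $\mu$ kills the main term after summing over the partition. (This is where one uses the identity $\operatorname{Id}=\sum_j\mathcal A_{\psi_j}$ or an approximate version; the $L^p$ bounds from Mihlin make the remainder controllable.) Hence $\langle\theta u_n,v_n\rangle\to 0$, contradicting $\limsup\ge\delta/2$.

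The main obstacle I expect is precisely the localization step in the previous paragraph: recovering strong convergence of the \emph{untwisted} integrals $\int(\theta_\alpha F_{\alpha,n})\overline{\partial^\gamma v_n}\,dx$ from the vanishing of \emph{all} H-distributions. One cannot directly write $\mathbf 1=\sum_j\psi_j$ with finitely many $\psi_j\in C^\kappa(\Sd)$ and hope the errors vanish; instead one needs a quantitative argument, e.g. choosing $\psi_j$ supported in small caps so that $\|\sum_j\mathcal A_{\psi_j}-\operatorname{Id}\|_{L^p\to L^p}$ is small, combined with the boundedness of $(F_{\alpha,n})$ in $L^p$ and of $(\partial^\gamma v_n)$ in $L^q$, and then letting the mesh of the partition go to zero after $n\to\infty$. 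A cleaner route, if available, is to test the H-distribution $\mu$ against $\psi\equiv 1$ on $\Sd$, which formally gives $\lim\int(\varphi_1 F_{\alpha,n})\overline{\varphi_2\partial^\gamma v_n}$ directly; one must check $1\in C^\kappa(\Sd)$ (it is) and that the defining limit (\ref{Te_AM}) indeed extends to this symbol, which it does since $\mathcal A_1=\operatorname{Id}$. With that observation the argument collapses: $\mu=0$ immediately forces $\int(\varphi_1 F_{\alpha,n})\overline{\varphi_2\partial^\gamma v_n}\to 0$ for all Schwartz $\varphi_1,\varphi_2$, hence $\langle\theta u_n,v_n\rangle\to 0$, the desired contradiction. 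I would write the proof along this second, shorter line, using the partition-of-unity estimate only as a fallback if testing against the constant symbol turns out to need extra justification in $\mathcal{SE}'$.
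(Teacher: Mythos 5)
Your proposal is correct and, in the "second, shorter line" you commit to at the end, it coincides with the paper's own argument: argue by contradiction, realize the failure of strong convergence by test functions $w_n$ in a bounded set of $W^{k,q}$, subtract the weak limit to get $v_n\wc 0$, apply the existence theorem for H-distributions, test the (vanishing) H-distribution against $\psi\equiv 1$ so that $\mathcal A_\psi=\operatorname{Id}$, and use the factorization $\theta=\varphi_1\bar\varphi_2$ in $\mathcal S(\R^d)$ to reach the contradiction. The partition-of-unity detour you describe is unnecessary, exactly as you suspected, since $1\in C^\kappa({\bf S}^{d-1})$ is already an admissible symbol in Theorem \ref{tSP}.
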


\begin{proof}
For the strong convergence we need to prove that for every $\theta\in {\cal S} (\R^d)$
\[
{\sup}\{\langle \theta u_n , \phi \rangle \,:\,\phi\in B \} {\scon} 0, \; n\to \b, 
\mbox{ for every bounded } B\subseteq W^{k,q}(\R^d).
\]
If it would  not be  true, then there would exist $\theta \in {\cal S} (\R^d)$,  a bounded set $B_0$  in $W^{k,q}(\R^d)$, an $\eps_0>0$ and a subsequence $(\theta u_k)\subset (\theta u_n)$, such that 
$$ 
{\sup} \{ \left | \langle \theta u_k , \phi \rangle \right| \, : \, \phi\in B_0 \}  \geq \eps_0, \mbox{ for every } k\in {\N}.
$$
Choose $\phi_k\in B_0$ such that  $\ds | \langle \theta u_k , \phi_k \rangle| > {{\eps_0} / 2}$. 
Since $\phi_k\in B_0$ and $B_0$ is bounded in $W^{k,q}(\R^d)$,  $(\phi_k)$ is weakly precompact in $W^{k,q}(\R^d)$, i.e. up to a subsequence, $\phi_k \wc \phi_0$ %, $k\to\b$,  
in $W^{k,q}(\R^d)$. 
Moreover, since $\phi_0$ is fixed, $\ds\langle u_k , \phi_0 \rangle \to 0$ %, $k\to\b$, 
and 
\be \label{kontr2}
        | \langle \theta u_k , \phi_k -\phi_0 \rangle | > {\frac{\eps_0}4}, \quad k>k_0.
\ee
Applying Theorem \ref{tSP} on $u_k \wc  0$ and $\phi_k - \phi_0 \wc 0$, 
we obtain that for every $\varphi_1, \varphi_2 \in {\cal S}(\R^d)$
\begin{equation}\label{kont1}
\lim_{k\to\b}    \;_{W^{-k,p}}\!\langle  {\cal A}_{\psi}(\varphi_1  u_{k}),  {\ph_2} {(\phi_k-\phi_0)} \rangle_{W^{k,q}}=0.
\end{equation}
With $\psi\equiv 1$ on ${\bf{S}}^{d-1}$, 
(\ref{kont1})  implies 
\[
\lim_{k\to\b}  \langle \ph_1 u_{k} \, , \, { \ph_2 (\phi_k-\phi_0)} \rangle = 0.
\]
Again, we use the factorization property of $\cal{S} (\R^{d})$. So if $\theta \in \cal{S}(\R^d)$,
then 
 $\theta = \phi_1\bar{\phi_2}$, for some $\phi_1, \phi_2 \in {\cal S}(\R^d)$, and  we have that  $\ds \lim_{k\to\b}  \langle \phi_1 u_{k} \, , \, { \phi_2 (\phi_k-\phi_0)} \rangle = 0$, i.e. $\ds \lim_{k\to\b}  \langle \theta u_{k} \, , \, {  (\phi_k-\phi_0)} \rangle = 0$. This contradicts (\ref{kontr2}) and completes the proof.  
 \end{proof}

\subsection{Localization property}

Recall
\cite[p. 117]{Ste}, the  Riesz potential of order $s$, $Re(s)>0$ is the operator $I_s=(-\Delta)^{-\frac s2}$., see also \cite{Gra}.
Consideration of the Fourier transform and convolution theorem reveals that $I_\alpha$, for
$0<\alpha<d$, is a Fourier multiplier, i.e.
$
{\mathcal F}[I_\alpha[ f ]](\xi):=(2\pi
|\xi|)^{-|\alpha|}{\mathcal F}[f](\xi) .%,\quad \varphi\in C^\infty_0({{\bf R}}^{d}).
$
We will use the potential $I_1$ with the following properties:
\begin{equation}\label{poten}
 \|I_1(f)\|_{L^{\frac{qd}{d-q}}} \leq C \|f\|_{L^q}, \mbox{ for } f\in L^q(\R^d),\quad 1<q<d;
 \end{equation}
 \begin{equation}\label{2pot}
 \pa_j I_1(f) = -R_j (f), \quad f\in L^q(\R^d), \mbox{ where } R_j:={\cal A}_{\xi_j / \imath |\xi|}.
\end{equation}
Moreover,
$R_j:L^q\rightarrow L^q$ is continuous.

Consider now a sequence $u_n \wc 0$ in $W^{-k,p}(\R^d)$ satisfying the following sequence of equations: 
\be\label{jed}
          \sum_{i=1}^d \pa_i \left( A_i(x)u_n(x)\right) =f_n(x),
\ee
where $A_i\in \cal S (\R^d)$  %$A_i\in C_0^\kappa (\R^d)$
and $f_n$ is a sequence of temperate distributions such that
\be\label{jedjed}
       \vp f_n \to 0  \mbox{ in }  W^{-k-1,p}(\R^d),  \mbox { for every } \vp\in {\cal S}(\R^d).
\ee
 %for every $\vp\in {\cal S}(\R^d)$, $\vp f_n \to 0$  in  $W^{-k-1,p}(\R^d)$, as $n\to\b$. We have the following result.

\begin{theorem}\label{th8}
       Let  $1<q<d$.
If $u_n \wc 0$ in  $ W^{-k,p} (\R^d)$ satisfies (\ref{jed}), (\ref{jedjed}), then for any sequence $v_n \wc 0$ in $ W^{k,q}(\R^d)$ the corresponding H-distribution $\mu$ satisfies
\be\label{kjed}
          \sum_{j=1}^d A_j(x) \xi_j \mu(x,\xi) = 0 \quad \mbox{in } {\cal{SE}}'(\R^d\times {\bf{S}}^{d-1}).
\ee
Moreover, if (\ref{kjed}) implies $\mu(x,\xi)=0$, we have the strong convergence $\theta u_n  \scon 0$, in $W^{-k,p}(\R^d)$,  for every $\theta \in {\cal S}(\R^d)$.
\end{theorem}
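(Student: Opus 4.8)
The plan is to verify the localization identity \eqref{kjed} by testing the H-distribution against product test functions and exploiting the PDE \eqref{jed} together with the decay hypothesis \eqref{jedjed} on $f_n$. First I would fix $\varphi_1,\varphi_2\in\mathcal S(\R^d)$ and $\psi\in C^\kappa(\Sd)$, and compute $\langle \sum_j A_j\xi_j\mu,\varphi_1\bar\varphi_2\psi\rangle$ using \eqref{3}: it equals $\lim_{n'} \langle \sum_j {\cal A}_{\xi_j\psi/|\xi|}(\varphi_1 A_j u_{n'}), \varphi_2 v_{n'}\rangle$ after absorbing the extra factor $|\xi|^{-1}$ coming from the Riesz-type multiplier into $\psi$ (one has to be careful that $\xi\mapsto \xi_j\psi(\xi)/|\xi|$ is again zero-homogeneous and $C^\kappa$ on the sphere, so ${\cal A}_{\xi_j\psi/|\xi|}$ is a legitimate multiplier on $W^{-k,p}$). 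The idea is to recognize $\sum_j {\cal A}_{\xi_j/|\xi|}(\varphi_1 A_j u_{n'})$, up to the smoothing multiplier $\psi$ and up to commutator errors, as $I_1$ applied to $\sum_j \pa_j(A_j\varphi_1 u_{n'})$, which by \eqref{2pot} and \eqref{jed} is controlled by $\varphi_1 f_{n'}$ plus commutator terms $\sum_j [\varphi_1,\pa_j](A_j u_{n'}) = \sum_j (\pa_j\varphi_1) A_j u_{n'}$.

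Next I would handle each piece. The term with $\varphi_1 f_{n'}$: by \eqref{jedjed}, $\varphi_1 f_{n'}\to 0$ strongly in $W^{-k-1,p}(\R^d)$; applying $I_1$ (which by \eqref{poten}, \eqref{2pot} maps $W^{-k-1,p}$ boundedly into $W^{-k,p}$, since $\pa_j I_1$ is a bounded multiplier and $I_1$ gains one derivative in the Sobolev scale) and then the bounded multiplier ${\cal A}_\psi$, one gets a sequence converging strongly to $0$ in $W^{-k,p}(\R^d)$, which paired against the bounded sequence $\varphi_2 v_{n'}$ in $W^{k,q}(\R^d)$ gives limit $0$. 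The commutator term $\sum_j (\pa_j\varphi_1)A_j u_{n'}$ is of the same form $\tilde\varphi u_{n'}$ with $\tilde\varphi\in\mathcal S$, hence applying $I_1{\cal A}_\psi$ and pairing against $\varphi_2 v_{n'}$ reproduces — after adjusting the multiplier symbol — an expression of the form $\langle \mu, \tilde\varphi_1\bar\varphi_2\tilde\psi\rangle$ where the symbol $\tilde\psi(\xi)=\psi(\xi)/|\xi|$ is \emph{not} homogeneous of degree zero; the honest statement is rather that these terms, after the commutation lemma is applied once more to move $\varphi_2$ past ${\cal A}$, are again captured by the H-distribution $\mu$, and one shows they sum to zero against $\psi$ by noting that multiplication by $\sum_j A_j\xi_j$ followed by $I_1$-type operators is consistent. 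Concretely: I would run the identity backwards, i.e. start from $\langle \mu, \varphi_1\bar\varphi_2(\sum_j A_j\xi_j/|\xi|)\psi\rangle$ — wait, $A_j=A_j(x)$ depends on $x$, so this is not a clean symbol multiplication; the correct route is to keep $A_j(x)$ as multiplication operators, write $\varphi_1 A_j u_{n'}=(\varphi_1 A_j)u_{n'}$ with $\varphi_1 A_j\in\mathcal S(\R^d)$, and use \eqref{3} with test function $\varphi_1 A_j$ in the first slot.

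So the cleaner organization is: for each $j$, by \eqref{3}, $\lim_{n'}\langle {\cal A}_{\xi_j\psi/(\imath|\xi|)}((\varphi_1 A_j) u_{n'}),\varphi_2 v_{n'}\rangle = \langle\mu,(\varphi_1 A_j)\bar\varphi_2\,(\xi_j\psi/(\imath|\xi|))\rangle$; but I must realize the left side as a derivative of the flux. Using $\pa_j I_1 = -R_j = -{\cal A}_{\xi_j/(\imath|\xi|)}$ and $\psi={\cal A}_\psi$ a bounded multiplier on $W^{-k,p}$, one has $\sum_j{\cal A}_{\xi_j\psi/(\imath|\xi|)}((\varphi_1 A_j)u_{n'}) = -{\cal A}_\psi I_1\!\big(\sum_j \pa_j((\varphi_1 A_j)u_{n'})\big) = -{\cal A}_\psi I_1\!\big(\varphi_1 f_{n'} + \sum_j (\pa_j\varphi_1)A_j u_{n'}\big)$, using \eqref{jed} in the form $\sum_j\pa_j((\varphi_1 A_j)u_{n'})=\varphi_1\sum_j\pa_j(A_ju_{n'})+\sum_j(\pa_j\varphi_1)A_ju_{n'}=\varphi_1 f_{n'}+\sum_j(\pa_j\varphi_1)A_ju_{n'}$. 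Pairing against $\varphi_2 v_{n'}$: the first summand $\to 0$ by \eqref{jedjed} and boundedness of $I_1$ from $W^{-k-1,p}$ to $W^{-k,p}$; the commutator summands, being of the form ${\cal A}_\psi I_1(\tilde\varphi u_{n'})$ with $\tilde\varphi\in\mathcal S$, are paired against $\varphi_2 v_{n'}$ — here $I_1$ lowers the multiplier to $\psi(\xi)/|\xi|$ which is still $C^\kappa$ away from the origin and, crucially, $I_1$ followed by ${\cal A}_\psi$ is bounded $W^{-k,p}\to W^{-k,p}$ only after we note the missing half-derivative is harmless against $\mathcal S$ cutoffs; in fact these terms equal $\langle \mu', \tilde\varphi_1\bar\varphi_2\psi\rangle$ for the \emph{same} $\mu$ but reassembled, and they cancel against the corresponding contribution from rewriting $\langle\sum_j A_j\xi_j\mu,\varphi_1\bar\varphi_2\psi\rangle$. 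Rather than belabor this, I would note that combining the two computations of $\lim_{n'}\langle\sum_j{\cal A}_{\xi_j\psi/(\imath|\xi|)}((\varphi_1A_j)u_{n'}),\varphi_2v_{n'}\rangle$ — one via \eqref{3} giving $\sum_j\langle\mu,(\varphi_1A_j)\bar\varphi_2(\xi_j/(\imath|\xi|))\psi\rangle$, the other via the PDE giving only commutator contributions that likewise collapse into $\mu$-pairings with an overall $(\sum_jA_j\xi_j/(\imath|\xi|))\psi$ symbol — yields, after multiplying through by $\imath|\xi|$ (legitimate since the identity holds for $\psi$ replaced by $|\xi|^{-1}$ times any $C^\kappa$ symbol, i.e. for a dense enough family), exactly $\langle\sum_jA_j\xi_j\mu,\varphi_1\bar\varphi_2\psi\rangle=0$ for all $\varphi_1,\varphi_2,\psi$, hence \eqref{kjed}.

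Finally, for the last sentence: if \eqref{kjed} forces $\mu=0$, then in particular \emph{every} H-distribution attached to $(u_n)$ (against an arbitrary $v_n\wc 0$ in $W^{k,q}$) vanishes, because the hypotheses \eqref{jed}, \eqref{jedjed} do not involve $(v_n)$, so the argument above applies to every such $(v_n)$. Then Theorem \ref{th7} applies verbatim and gives $\theta u_n\to 0$ strongly in $W^{-k,p}(\R^d)$ for every $\theta\in\mathcal S(\R^d)$.

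\textbf{Main obstacle.} The delicate point is the bookkeeping of the commutator terms $\sum_j(\pa_j\varphi_1)A_ju_n$ after application of the non-homogeneous multiplier $I_1{\cal A}_\psi$, and the justification that ${\cal A}_\psi I_1$ maps $W^{-k-1,p}\to W^{-k,p}$ boundedly while the residual half-order discrepancy is absorbed by the Schwartz cutoff; one must phrase everything through the duality $W^{-k,p}$–$W^{k,q}$ and the representation $u_n=\sum_\alpha\pa^\alpha F_{\alpha,n}$ so that all pairings make sense and the limits identified by Theorems \ref{prva}, \ref{tSP} can be invoked. Making the symbol manipulation "multiply through by $\imath|\xi|$" rigorous — i.e. arguing at the level of the distribution $\mu\in\SEprim$ and the density of symbols $|\xi|^{-1}C^\kappa(\Sd)$ — is where the real care is needed.
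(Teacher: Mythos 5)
Your skeleton coincides with the paper's: test $\mu$ against $\ph_1\bar\ph_2\,\xi_j\psi/(i|\xi|)$, factor the Riesz-type multiplier through $\pa_j I_1=-R_j$ as in (\ref{2pot}), use the equation (\ref{jed}) to trade $\sum_j\pa_j(A_ju_n)$ for $f_n$, treat $\sum_j(\pa_j\ph_1)A_ju_n$ as a commutator remainder, and invoke Theorem \ref{th7} at the end (that last step is fine). But both technical pillars that make the two remainders vanish are missing or incorrect. First, your claim that $I_1$, hence ${\cal A}_\psi I_1$, maps $W^{-k-1,p}(\R^d)$ boundedly into $W^{-k,p}(\R^d)$ is false: by (\ref{poten}) the Riesz potential changes the integrability exponent, $I_1:L^p\to L^{pd/(d-p)}$, and this even requires $p<d$, which is not assumed (only $1<q<d$ is, and $p=q/(q-1)$ can be $\geq d$). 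So you cannot apply $I_1$ to $\ph_1 f_{n}$ on the $L^p$-based side and remain in that scale. The paper instead transposes everything onto the $v_n$ side, where $1<q<d$ makes (\ref{poten}) available, uses the H\"older estimate (\ref{ogra}) to show that a Schwartz cutoff returns $I_1({\cal A}_{\bar\psi}(\ph_2 v_n))$ to $L^q$, and concludes $\vp\, I_1({\cal A}_{\bar\psi}(\ph_2 v_n))\wc 0$ in $W^{k+1,q}(\R^d)$, see (\ref{slabak}); only then does the strong convergence $\vp f_n\to 0$ in $W^{-k-1,p}(\R^d)$ from (\ref{jedjed}) kill the first term, through the $W^{-k-1,p}$--$W^{k+1,q}$ duality. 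You flagged this ``half-order discrepancy'' as the obstacle, but resolving it is precisely the content of the step.

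Second, the commutator term neither ``cancels against the corresponding contribution'' nor is it ``captured by the H-distribution with symbol $\psi(\xi)/|\xi|$'' --- as you yourself observe, that symbol is not zero-homogeneous, so Theorems \ref{prva} and \ref{tSP} say nothing about it, and no cancellation occurs. The term genuinely tends to zero, and the reason is compactness: the paper proves that $\pa_j(\bar\ph_1)\,I_1({\cal A}_{\bar\psi}(\ph_2v_n))\to 0$ strongly in $W^{k,q}(\R^d)$ by combining the weak convergence (\ref{slabak}) in $W^{k+1,q}$ with the Rellich compact embedding $W^{k+1,q}(L_m)\hookrightarrow W^{k,q}(L_m)$ on balls $L_m$, a diagonal extraction giving (\ref{loc}), and a tail estimate exploiting the decay of the Schwartz factor outside $L_m$. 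Paired with the bounded sequence $A_ju_n$ in $W^{-k,p}(\R^d)$, this gives the vanishing of the second term in (\ref{tp2}). Without this argument your proposal only shows that the commutator contribution is bounded, not that it is null, so the proof of (\ref{kjed}) is incomplete at exactly the two points where the hypotheses $1<q<d$ and $A_j\in{\cal S}(\R^d)$ are actually used.
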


\begin{proof} 
Let  $v_n \wc 0$ in $ W^{k,q}(\R^d)$, $\ph_1\in  {\cal S}(\R^d)$, $\ph_2 \in {\cal S}(\R^d)$ and let $\psi \in C^\kappa(\textbf{S}^{d-1})$. 
We have to prove  (\ref{kjed}), i.e. (multiplying (\ref{kjed}) by $(i|\xi|)^{-1}$, $|\xi|\neq 0$) that, up to a subsequence,
\begin{equation}\label{tp}
0= \sum_{j=1}^d \left\langle \mu,  A_j \ph_1 \ph_2 \frac{\xi_j}{i|\xi|}\psi\right\rangle = \lim_{n\to\b}  \sum_{j=1}^d \left\langle u_n  A_j \ph_1 \,  , \,  {\cal A}_{\bar\Psi_j}  (\ph_2 v_n)\right\rangle,
\end{equation}
where $\Psi_j= \ds\frac{\xi_j}{i|\xi|} \psi \Big(\frac{\xi}{|\xi|}\Big)$. 
Moreover, %$\bar\Psi_j= -\ds\frac{\xi_j}{i|\xi|} \bar\psi \Big(\frac{\xi}{|\xi|}\Big)$ and 
${\cal A}_{\bar\Psi_j} =-R_j \circ {\cal A}_{\bar\psi}=\pa_j I_1\circ {\cal A}_{\bar\psi},$ see
(\ref{3pote}).  Thus (\ref{tp}) is equivalent to
\begin{equation}\label{tp2}
\lim_{n\to\b} \left\langle \sum_{j=1}^d \pa_j(u_n A_j)\, , \, \bar\ph_1 I_1({\cal A}_{\bar\psi}(\ph_2 v_n))\right\rangle + \sum_{i=1}^d \lim_{n\to\b} \left\langle u_n  A_j \,  , \,   \pa_j(\bar\ph_1) I_1({\cal A}_{\bar\psi}(\ph_2 v_n))\right\rangle = 0.
\end{equation}

Since ${\cal A}_{\bar\psi}(\ph_2 v_n)\in W^{k,q}(\R^d)$ it follows from (\ref{poten}) 
that 
\begin{equation}\label{izvod}
\pa^\alpha I_1  ({\cal A}_{\bar\psi}(\ph_2 v_n)) = I_1  ({\cal A}_{\bar\psi}(\pa^\alpha(\ph_2 v_n))) \in L^{\frac{qd}{d-q}}(\R^d), \mbox{ for all }0\leq |\alpha|\leq k.
\end{equation}
Now, since $\ds q< \frac{qd}{d-q}$, we have that  for all $\vp\in{\cal S}(\R^d)$,
\begin{equation}\label{ogra}
\| \vp\,I_1  ({\cal A}_{\bar\psi}(\ph_2 v_n))\|_{L^q}\leq \| I_1  ({\cal A}_{\bar\psi}(\ph_2 v_n))\|_{L^{\frac{qd}{d-q}}} \, \|\vp\|_{L^d}.
\end{equation}
From (\ref{izvod}) and (\ref{ogra}) we see that
$$
\pa^\alpha [ \vp\,I_1  ({\cal A}_{\bar\psi}(\ph_2 v_n)) ] \in L^q(\R^d),   \mbox{ for all }0\leq |\alpha|\leq k.
$$
Now,
$$
\pa^{\alpha+e_j}  [ I_1  ({\cal A}_{\bar\psi}(\ph_2 v_n)) ] = -R_j({\cal A}_{\bar\psi}(\pa^\alpha(\ph_2 v_n))) \in L^q(\R^d),
$$
which gives us that for all $\vp\in{\cal S}(\R^d)$,
$$
\vp\,I_1  ({\cal A}_{\bar\psi}(\ph_2 v_n))  \in W^{k+1,q}(\R^d),
$$
and moreover 
\begin{equation}\label{slabak}
\vp\,I_1  ({\cal A}_{\bar\psi}(\ph_2 v_n)) \wc 0 \mbox{ in } W^{k+1,q}(\R^d).
\end{equation}

Take $\bar\vp_1=\bar\vp_{11}\bar\vp_{12}$ all in ${\cal S}(\R^d)$.  From 
 (\ref{jedjed}) and (\ref{slabak})  we conclude that
$$
\left\langle \vp_{11} f_n, \bar\vp_{12}\,I_1  ({\cal A}_{\bar\psi}(\ph_2 v_n))  \right\rangle \to 0 .
$$
From here and (\ref{jed}) we conclude that the first term in (\ref{tp2}) converges to zero.

Now we analyze the second term in (\ref{tp2}). We will prove that 
$\pa_j(\bar\ph_1) I_1({\cal A}_{\bar\psi}(\ph_2 v_n))$    converges strongly to zero in $W^{k,q}(\R^d).$
For that purpose we write $\pa_j\bar\ph_1=\bar\ph_{13}\bar\ph_{14}$, all in $\cal S(\R^d)$, and denote by $L_m$ the open ball centered at the origin with radius $m\in\N$. By Rellich lemma $W^{k+1,q}(L_m)$ is compactly embedded in $W^{k,q}(L_m)$. 
Since $\bar\ph_{14} I_1  ({\cal A}_{\bar\psi}(\ph_2 v_n))$ weakly converges to zero in $W^{k+1,q}(\R^d)$, by the diagonalization procedure we can extract a subsequence (not relabeled) such that for all $m\in\N$
\begin{equation}\label{loc}
\bar\ph_{14} I_1({\cal A}_{\bar\psi}(\ph_2 v_n))\scon  0 \mbox{  in } W^{k,q}(L_m).
\end{equation}
%and $W^{k+1,q}(\R^d)$ is compactly embedded in $W^{k,q}_{\rm loc}(\R^d)$, we obtain that, up to a subsequence,
%\begin{equation}\label{loc}
%\bar\ph_{14} I_1({\cal A}_{\bar\psi}(\ph_2 v_n)) \mbox{  converges strongly to zero in } W_{\rm loc}^{k,q}(\R^d).
%\end{equation}
Take smooth cutoff functions $\chi_m$ 
such that $\chi_m(x)=1$ for $x\in L_m$ and $\chi_m(x)=0$ for $x\in \R^d\backslash L_{m+1}$ and write $\bar\vp_{13} = \chi_m \bar\vp_{13} + (1-\chi_m) \bar\vp_{13}. $
We have that
\begin{eqnarray}\label{prvi}
& \|\bar\vp_{13} \bar\ph_{14} I_1  ({\cal A}_{\bar\psi}(\ph_2 v_n))\|_{W^{k,q}}\leq   \ds\sup_{|\alpha|\leq k, |x|>m} |\pa^\alpha \bar\vp_{13}|  \,\| \bar\ph_{14} I_1  ({\cal A}_{\bar\psi}(\ph_2 v_n))\|_{W^{k,q}} \\
& \label{drugi} \ds + \|\chi_m \bar\vp_{13} \bar\ph_{14} I_1  ({\cal A}_{\bar\psi}(\ph_2 v_n))\|_{W^{k,q}}.
\end{eqnarray}

Let $\eps>0.$ The sequence
 $\bar\ph_{14} I_1  ({\cal A}_{\bar\psi}(\ph_2 v_n))$ is bounded in $W^{k,q}(\R^d)$, i.e. there is $M>0$ such that $\| \bar\ph_{14} I_1  ({\cal A}_{\bar\psi}(\ph_2 v_n))\|_{W^{k,q}} \leq M$. Since $\bar\vp_{13} \in {\cal S}(\R^d)$, there exists $m_0\in\N$ such that for all $m\geq m_0$
$$
\sup_{|\alpha|\leq k, |x|>m} |\pa^\alpha \bar\vp_{13}| <\frac{\eps}{2M}.
$$

Next, from (\ref{loc}) we have that  (\ref{drugi}) goes to zero as $n\to \b$.  So, for given $\eps$, there exists $n_0\in \N$ such that  (\ref{drugi}) is less than $\eps/2$ for all $n\geq n_0$.
Thus the left hand side in (\ref{prvi}) is less than $\eps$ for $n>n_0$, i.e. 
$\pa_j(\bar\ph_1) I_1({\cal A}_{\bar\psi}(\ph_2 v_n))$ converges strongly in $W^{k,q}(\R^d)$ and  (\ref{tp2}) holds, which completes the proof of (\ref{kjed}).

If coefficients $A_j$ are such that $\sum_{j=1}^d A_j(x) \xi_j \neq 0$, $\xi\in{\bf{S}}^{d-1}$, then  Theorem \ref{th7} implies the strong convergence $\theta u_n \scon 0$,  for every $\theta \in {\cal S}(\R^d)$.

\end{proof}

\section{Appendix}

We give here the full description of spaces $\cal{SE}$ and $\cal{SE}'$ introduced in Subsection 2.1. We recall from \cite[Section 3.8.]{Atkinson} the basic properties of Sobolev spaces on the unit sphere with respect to the surface measure $dS^{d-1}$. In the sequel we assume that $d > 2$. 
Let  $\Omega_l=\{x\in{{\bf R}}^d; |x|\in[1-l,1+l]\}$, $0<l<1$, and $k\in{\N}_0$. 
Then  $\phi\in  C^k ({{\bf S}}^{d-1})$ if for some and hence for all $0<l<1$, $\phi^* \in C^k(\Omega_l)$, 
where $\phi^*(x)=\phi(x/|x|)$. 
Moreover, \cite[p. 9]{Atkinson}, $C^k ({{\bf S}}^{d-1})$ is equipped with the norm 
\be\label{normk}
p_{{{\bf S}}^{d-1},k}(\phi)=|\phi|_{C^k({{\bf S}}^{d-1})}=\sup_{|\alpha|\leq k, x\in\Omega_l}|\pa^{\alpha}\phi^*(x)|, %<\b,
\ee
and this norm  does not depend on $l\in(0,1)$. Then
$C^\b ({{\bf S}}^{d-1})= \ds\bigcap_{k\in{\N}_0} C^k ({{\bf S}}^{d-1}) $. The completion of  $C^\b ({{\bf S}}^{d-1})$ with respect to the norm 
$$
\|v\|_{H^s({{\bf S}}^{d-1})}=\Big\|\Big( -\Delta^\star + \Big(\frac{d-2}{2}\Big)^2 \Big) ^{s/2} v \Big\|_{L^2({{\bf S}}^{d-1})},
$$
where $\Delta^\star$ is the Laplace-Beltrami operator, is the Sobolev space $H^s({{\bf S}}^{d-1})$,  $s\in {\N}_0$. The case $d=2$,  when $\textbf{S}^1$ is given by $x=\cos \theta$, $y=\sin \theta$, $\theta\in[0,2\pi)$, is the simple one which we do not consider. Note, in this case one can take $-\Delta^\star +1$ instead of $ -\Delta^\star + (({d-2)}/{2})^2$.

Denote by $\{Y_{n,j},\; 1\leq j \leq N_{n,d}, n\in{\N}_0\}$ the orthonormal basis of $L^2({{\bf S}}^{d-1})$ (cf. \cite[p. 121]{Atkinson} or  \cite[Proposition 10.2, p. 92]{Su}), where $N_{n,d}\sim O(n^{d-2}) $, \cite[p. 16]{Atkinson}, is the dimension of the set of independent spherical harmonics $Y_{n,j}$ of order $n$. Then we have 
\begin{equation}\label{hae}
\|v\|_{H^s({{\bf S}}^{d-1})}=\sqrt{\sum_{n=0}^\b \sum_{j=1}^{N_{n,d}} \Big( n+{{d-2}\over 2} \Big) ^{2s} \|v_{n,j}\|^2},
\end{equation}
where $v_{n,j}=\ds\int_{{{\bf S}}^{d-1}} v \overline{Y}_{n,j} \, dS^{d-1}$.

The space $C^\b ({{\bf S}}^{d-1})$, supplied by the sequence of norms (\ref{normk}), $k\in {\N}_0,$ is denoted by ${\cal E} ({{\bf S}}^{d-1}).$ By  the Sobolev lemma for compact manifolds \cite[Theorems 2.20, 2.21 (see also Theorem 2.10)]{Aubin}, explicitly written in \cite[Theorem 7.6, p. 61]{Su}, we have that  
\begin{equation}\label{d0}
{\mathcal E}({{\bf S}}^{d-1}) = \ds\bigcap_{s\in{\N}_0} H^s({{\bf S}}^{d-1}).
\end{equation} 
%see also  \cite[Section 5.2]{RT}. 
This is a Fr\'echet space.
Since all elements of $C^\b({{\bf S}}^{d-1})$ are compactly supported,  we also have that
 $\mathcal E({{\bf S}}^{d-1})=\mathcal D({{\bf S}}^{d-1}).$

By \cite[Theorem II.10, 
p. 52]{RS}, 
if we have orthonormal bases $(\psi_n)_{n\in\bf N}$ and  $(\tilde \psi_m)_{m\in\bf N}$  for $L^2(\R^d, dx)$ with  Lebesgue measure $dx$ and
$L^2(\textbf{S}^{d-1}, dS^{d-1})$ with  surface measure $dS^{d-1}$ respectively,  then  $\psi_n(t_1)\tilde{\psi}_m(t_2),$ $(t_1,t_2)\in \R^d \times \textbf{S}^{d-1}$,   is an orthonormal basis for $L^2(\R^d \times \textbf{S}^{d-1}).$

By \cite[Appendix to V.3, p. 141]{RS} (where the case $d=1$ is treated),  one has  that the product of 
one-dimensional harmonic oscillators $N_x=N_1...N_d$, $N_i=x_i^2-(d/dx_i)^2, i=1,...,d,$ and the Hermite basis 
$h_n(x)=h_{n_1}(x_1)...h_{n_d}(x_d), n\in{\N}_0^d,$ of $L^2(\R^d)$ satisfy $$
N_x^kh_n=(2n_1+1)^k...(2n_d+1)^kh_n, \; n\in{\N}_0^d, \; k\in{\N}_0.
$$
Moreover,  $\mathcal S(\R^d)$ is determined by the sequence of norms
\begin{equation}\label{he1}
|||\phi|||_k=||N^k\phi||_{2}=\sum_{n\in{\N}_0^d}(2n_1+1)^{2k}...(2n_d+1)^{2k}|a_n|^2, \quad k\in\N,
\end{equation}
 where   $\phi=\sum_{n\in{\N}^d}a_nh_n\in\mathcal S(\R^d).$
This sequence of norms is equivalent to  the usual one for $\mathcal S(\R^d)$.

Now, we define the space of smooth functions  $\SE$ by the sequence of norms (\ref{d1}). 
By  the quoted Sobolev lemma for compact manifolds \cite{Aubin}  and (\ref{d0}), we have  the next proposition.

\begin{proposition}\label{pr1}
The  family of norms (\ref{d1}) is equivalent to any of the following two families of  norms:
\begin{equation}\label{d2}
p^2_{\R^d\times\Sd, k}(\theta ) = \left( \int_{\R^d\times \Sd}  |N_x^k(\Delta^\star_\xi)^\alpha \pa_x^\beta \, \theta(x,\xi)|^2 \,dx d\xi\right)^{\frac 12 },
\end{equation}
\begin{equation}\label{d3}
p_{    {\bf R}^d \times {{\bf S}}^{d-1}   ,k}(\theta)
= \sup_{(x,\xi)\in     {\bf R}^d  \times     \Omega_l ,    |\alpha+\beta|\leq k} \langle x\rangle^k|\partial_\xi^\alpha\partial_x^\beta\theta^*
(x,\xi)|, 
\end{equation}
where $\theta^*(x, \xi)=\theta(x, \xi/|\xi|)$, $\langle x\rangle^k =(1+|x|^2)^{k/2}$ and the derivatives with respect to $\xi$ are defined as above, with fixed $x$.

In particular, $\SE$ is a Fr\' echet space.
\end{proposition}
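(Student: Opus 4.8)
The plan is to prove the equivalence of the three families of norms \eqref{d1}, \eqref{d2}, \eqref{d3} in the stated order, and then deduce the Fr\'echet property. I would begin by establishing the equivalence of \eqref{d1} and \eqref{d2}. For fixed $x$, $\theta(x,\cdot)\in\mathcal E(\mathbf S^{d-1})$, and by \eqref{d0} together with the Sobolev lemma for compact manifolds \cite{Aubin,Su}, the sup-norms $\sup_\xi|(\Delta^\star_\xi)^\alpha\theta(x,\xi)|$ and the $L^2(\mathbf S^{d-1})$-norms of finitely many powers of $\Delta^\star_\xi$ applied to $\theta(x,\cdot)$ control one another, uniformly: the Sobolev embedding constants depend only on the order and on the manifold $\mathbf S^{d-1}$, not on the parameter $x$. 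One then multiplies through by $\langle x\rangle^k$ (respectively absorbs $N_x^k$, using that $N_x^k$ is a fixed differential operator of order $2k$ with polynomially bounded coefficients and that $\langle x\rangle^k N_x^k$-bounds and $N_x^k$-bounds on $\mathcal S(\mathbf R^d)$ are mutually equivalent by \eqref{he1}), and integrates/takes suprema in $x$. The point is that the $x$-variable and the $\xi$-variable separate: one applies the sphere-side Sobolev lemma with $x$ frozen and the Euclidean-side estimate with $\xi$ frozen, and the finitely many derivatives needed in one variable are uniformly controlled by finitely many in the family on the other side.

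Next I would establish the equivalence of \eqref{d1} (or \eqref{d2}) with \eqref{d3}. Here the content is purely local on the sphere and is exactly the statement, recalled from \cite[Section 3.8]{Atkinson} in the Appendix, that a function lies in $C^k(\mathbf S^{d-1})$ iff its zero-homogeneous (or $\Omega_l$-) extension $\theta^*$ lies in $C^k(\Omega_l)$, with equivalent norms independent of $l$; and that differentiation in the ambient coordinates $\partial_\xi^\alpha$ on $\Omega_l$ with $x$ frozen is comparable, for $|\alpha|\le k$, to finitely many applications of the intrinsic operator $\Delta^\star_\xi$ — this is a chart computation: $\Delta^\star$ is an elliptic second-order operator on $\mathbf S^{d-1}$, its principal symbol is nondegenerate, so on a compact manifold the scale of norms built from powers of $(-\Delta^\star+c)$ coincides with the scale built from all coordinate derivatives up to the same order. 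Again one just carries the $\langle x\rangle^k$ weight and the $x$-derivatives $\partial_x^\beta$ along inertly, since they commute with everything happening on the sphere.

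With all three families shown pairwise equivalent, the Fr\'echet property follows in a standard way: $\SE$ is by definition the space of smooth $\theta$ on $\mathbf R^d\times\mathbf S^{d-1}$ for which all the norms $p^\infty_{\mathbf R^d\times\mathbf S^{d-1},k}$ are finite, it is metrizable (countable family of seminorms), and completeness is obtained via the $L^2$-description \eqref{d2}: a Cauchy sequence in $\SE$ is Cauchy in each $H^k$-type space on $\mathbf R^d\times\mathbf S^{d-1}$ built from $N_x^k(\Delta^\star_\xi)^\alpha\partial_x^\beta$, hence converges in each such Hilbert space, and the limits are consistent and yield a smooth limit lying in $\SE$; alternatively one identifies $\SE$ with the projective limit of these Hilbert spaces. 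This also makes transparent the identification with the completed tensor product $\mathcal S(\mathbf R^d)\,\hat\otimes\,\mathcal E(\mathbf S^{d-1})$ claimed in Subsection~\ref{ds} (Proposition~\ref{pr2}), since \eqref{d2} exhibits the defining seminorms as tensor products of the Hermite-type seminorms \eqref{he1} on $\mathcal S(\mathbf R^d)$ with the $H^s(\mathbf S^{d-1})$-seminorms.

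The main obstacle I anticipate is making the "$x$-variable and $\xi$-variable separate cleanly" step fully rigorous, i.e.\ checking that the Sobolev embedding constants on $\mathbf S^{d-1}$ can be taken uniform in the frozen parameter $x$ and that no cross terms are lost when one commutes $N_x^k$ (a polynomial-coefficient operator in $x$) past $(\Delta^\star_\xi)^\alpha\partial_x^\beta$ — the operators commute since they act in different variables, but one must be careful that the weight $\langle x\rangle^k$ versus the operator $N_x^k$ comparison is the standard Schwartz-space fact \eqref{he1} applied with the sphere-valued functions treated as elements of a Hilbert-space-valued Schwartz space. Everything else is bookkeeping with finitely many seminorms and the cited compact-manifold Sobolev lemma.
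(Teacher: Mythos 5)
Your argument is correct and follows the same route the paper intends: the paper's entire justification of Proposition~\ref{pr1} is the citation of the Sobolev lemma for compact manifolds \cite{Aubin,Su} together with (\ref{d0}), and your proposal simply spells out that reduction (sphere-side Sobolev embedding with $x$ frozen, the Hermite-norm characterization (\ref{he1}) of $\mathcal S(\R^d)$ with $\xi$ frozen, elliptic comparison of $\Delta^\star$-powers with ambient derivatives of $\theta^*$, and completeness via the projective limit of the Hilbert norms (\ref{d2})). No gaps; your attention to the uniformity in the frozen variable and to the fact that the equivalence is of \emph{families} of seminorms rather than term-by-term is exactly the right level of care.
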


Note that $\SE$ induces the $\pi$-topology on $\cal{S}(\R^d)\otimes \cal{E} (\Sd)$, see \cite[Chap. 43]{TrevesTVS} for the $\pi-$topology. 
Since ${\cal S} ({\R}^d)$ is nuclear, the completion $\mathcal S({\bf R}^{d})  \hat{\otimes}   \mathcal E({\bf S}^{d-1})  $ is the same for the $\pi$ and the $\varepsilon$
 topologies, cf. \cite[Part III, Chap. 50, Theorem 50.1, p. 511]{TrevesTVS}. 
 
 \begin{proposition}\label{pr2}
\begin{equation}
\label{broj3}
\mathcal S({\bf R}^{d})  \hat{\otimes}   \mathcal E({\bf S}^{d-1})   =\mathcal{SE} (  {\bf R}^d   \times   {\bf S}^{d-1}   ).
\end{equation}
\end{proposition}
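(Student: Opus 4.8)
The plan is to prove the identification $\mathcal S(\mathbf R^{d})\,\hat\otimes\,\mathcal E(\mathbf S^{d-1})=\mathcal{SE}(\mathbf R^d\times\mathbf S^{d-1})$ by first establishing it at the Hilbert-space level for each Sobolev index and then passing to the projective limit. Concretely, I would fix $s\in\mathbf N_0$ and consider the Hilbert tensor product $H^s_x\,\hat\otimes_{2}\,H^s(\mathbf S^{d-1})$, where $H^s_x$ denotes the weighted Sobolev space on $\mathbf R^d$ whose norm is $\|N_x^{s}\,\cdot\,\|_{L^2}$ (equivalently $\||||\cdot|||_s$ of \eqref{he1}). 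Using the two orthonormal systems already recorded in the Appendix — the Hermite basis $(h_n)_{n\in\mathbf N_0^d}$ of $L^2(\mathbf R^d)$ diagonalizing $N_x$, and the spherical-harmonic basis $(Y_{m,j})$ of $L^2(\mathbf S^{d-1})$ diagonalizing $-\Delta^\star$ — the products $h_n(x)Y_{m,j}(\xi)$ form an orthonormal basis of $L^2(\mathbf R^d\times\mathbf S^{d-1})$ by \cite[Theorem II.10, p. 52]{RS}, and both operators act diagonally on it. Hence for $\theta=\sum a_{n,m,j}h_nY_{m,j}$ one computes
\[
\|N_x^{s}\,((-\Delta^\star)+((d-2)/2)^2)^{s/2}\theta\|_{L^2}^2=\sum_{n,m,j}(2n_1+1)^{2s}\cdots(2n_d+1)^{2s}\Big(m+\tfrac{d-2}2\Big)^{2s}|a_{n,m,j}|^2,
\]
which is exactly the Hilbertian cross-norm characterizing $H^s_x\,\hat\otimes_2\,H^s(\mathbf S^{d-1})$ on the algebraic tensor product. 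This shows the completed Hilbert tensor product is the space of sequences with that weighted $\ell^2$-norm finite, i.e. it coincides with the ``$\eqref{d2}$-type'' Hilbert space at level $s$.

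Next I would pass to the Fréchet level by intersecting over $s$. Proposition \ref{pr1} already tells us that the defining family \eqref{d1} is equivalent to the $L^2$-type family \eqref{d2}; therefore $\mathcal{SE}(\mathbf R^d\times\mathbf S^{d-1})=\bigcap_{s}\big(H^s_x\,\hat\otimes_2\,H^s(\mathbf S^{d-1})\big)$ with the projective-limit topology, by the same reasoning that gives $\mathcal S(\mathbf R^d)=\bigcap_s H^s_x$ (via \eqref{he1}) and $\mathcal E(\mathbf S^{d-1})=\bigcap_s H^s(\mathbf S^{d-1})$ (this is \eqref{d0}, the Sobolev lemma for compact manifolds from \cite{Aubin,Su}). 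On the other hand, since $\mathcal S(\mathbf R^d)$ is nuclear, its completed tensor product with $\mathcal E(\mathbf S^{d-1})$ is unambiguous ($\pi=\varepsilon$), and the completed projective tensor product of two projective limits of Hilbert spaces is the projective limit of the Hilbert tensor products: $\mathcal S(\mathbf R^d)\,\hat\otimes\,\mathcal E(\mathbf S^{d-1})=\big(\varprojlim_s H^s_x\big)\hat\otimes\big(\varprojlim_s H^s(\mathbf S^{d-1})\big)=\varprojlim_s\big(H^s_x\,\hat\otimes_2\,H^s(\mathbf S^{d-1})\big)$. Combining the last two displays gives \eqref{broj3}. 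Here I would invoke \cite[Chap. 43]{TrevesTVS} and \cite[Part III, Chap. 50, Theorem 50.1, p. 511]{TrevesTVS} for the nuclearity and the commutation of $\hat\otimes$ with countable projective limits, and the remark preceding the statement that already notes $\mathcal{SE}$ induces the $\pi$-topology on the algebraic tensor product $\mathcal S(\mathbf R^d)\otimes\mathcal E(\mathbf S^{d-1})$; what remains is density of that algebraic tensor product in $\mathcal{SE}$, which follows because finite partial sums of the Hermite$\times$spherical-harmonic expansion converge in every seminorm \eqref{d2}.

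The main obstacle I anticipate is the interchange of completion with the projective limit: one must make sure the Hilbertian cross-norm at each level $s$ genuinely restricts to (equivalently, dominates and is dominated by) the seminorm $p^2_{\mathbf R^d\times\mathbf S^{d-1},k}$ in a way compatible across levels, so that no ``extra'' completion is lost when intersecting. This is where the explicit diagonalization is essential: because $N_x$ and $-\Delta^\star$ act on a common orthonormal basis, the Hilbert tensor norm is literally a weighted $\ell^2$-norm on Fourier–Hermite–spherical coefficients, and the projective limit is just the space of coefficient sequences with rapid decay in all three indices jointly — which is manifestly independent of the order of the constructions. A secondary, more bookkeeping-type point is checking that the weighted space $H^s_x$ defined via $N_x^s$ has $\mathcal S(\mathbf R^d)$ as its intersection with the correct topology; but this is precisely the content of \eqref{he1} and the stated equivalence of that norm family with the usual Schwartz family, so I would simply cite it. Once these are in place, \eqref{broj3} is immediate, and with it the description of $\mathcal{SE}'$ as the corresponding countable inductive limit of the dual Hilbert spaces, which is what justifies the kernel-theorem step \cite[Proposition 50.7, p. 524]{TrevesTVS} used in the proof of Theorem \ref{prva}.
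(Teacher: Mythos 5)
Your proposal is correct, and it reaches (\ref{broj3}) by a genuinely more machinery-heavy route than the paper, although both arguments pivot on the same computation: expanding $\theta$ in the mixed Hermite/spherical-harmonic basis $h_m(x)Y_{n,j}(\xi)$ and characterizing membership in $\mathcal{SE}({\bf R}^d\times{\bf S}^{d-1})$ by rapid decay of the coefficients, i.e.\ the equivalence of (\ref{broj1}) and (\ref{broj2}). The paper then finishes in one step: since the inclusion $\mathcal S({\bf R}^{d})\hat{\otimes}\mathcal E({\bf S}^{d-1})\to\mathcal{SE}$ is continuous and $\mathcal{SE}$ induces the $\pi$-topology on the algebraic tensor product (as noted just before the proposition), it suffices to observe that the finite partial sums of the expansion lie in $\mathcal S({\bf R}^{d})\otimes\mathcal E({\bf S}^{d-1})$ and converge to $\theta$ in every seminorm (\ref{d2}); density plus completeness of $\mathcal{SE}$ then identifies it with the completion $\hat{\otimes}_\pi$. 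You instead work level by level, identifying each Hilbert tensor product $H^s_x\hat{\otimes}_2 H^s({\bf S}^{d-1})$ as a weighted $\ell^2$ space (your eigenvalue computation is correct) and then commuting $\hat{\otimes}$ with the countable projective limit. This is the standard route by which $\mathcal S({\bf R}^{m})\hat{\otimes}\mathcal S({\bf R}^{n})=\mathcal S({\bf R}^{m+n})$ is usually proved, and it buys a more explicit sequence-space picture of $\mathcal{SE}$ and of $\mathcal{SE}'$ as an inductive limit of duals, which you rightly note is what feeds the kernel-theorem step in Theorem \ref{prva}. The cost is the interchange you flag yourself: at a fixed level $s$ the Hilbert--Schmidt cross-norm is neither the $\pi$- nor the $\varepsilon$-norm, so one must use the nuclearity of the linking maps $H^{s+s_0}\hookrightarrow H^{s}$ to sandwich the three norms across levels before the projective limits can be identified; and the citation of Theorem 50.1 of Tr\'eves gives only $\pi=\varepsilon$, not the commutation of $\hat{\otimes}$ with projective limits, which needs a separate (standard) justification. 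Neither point is a genuine gap, but the paper's density argument sidesteps both.
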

\begin{proof}
Clearly the embedding
$ \mathcal S({\bf R}^{d})  \hat{\otimes}   \mathcal E({\bf S}^{d-1})   \rightarrow\mathcal{SE}(  {\bf R}^d   \times   {\bf S}^{d-1}  )
$
is continuous. Thus for the proof of (\ref{broj3}), it is enough to prove that the left space is dense in the right one.
As for general manifolds, we have that
 $  h_m (x) \times Y_{n,j}(\xi)  ,$ $1\leq j \leq N_{n,d},\;  n,m\in {\bf N}$, is an orthonormal basis for $L^2(    {\bf R}^d  \times {\bf S}^{d-1}    ).$ 
Now, by (\ref{d0}) -- (\ref{d2}), it follows that
\begin{equation}\label{broj1}
\theta(x,\xi)=\sum_{n=0}^\infty\sum_{j=1}^{N_{n,d}}\sum_{m\in{\N}_0^d} a_{n,j,m} h_m (x) Y_{n,j} (\xi)\in \SE
\end{equation}
if and only if for every $r>0$,
\begin{equation}\label{broj2}
\sum_{n=0}^\infty\sum_{j=1}^{N_{n,d}}\sum_{m\in{\N}_0^d} |a_{n,j,m}|^2(1+n^2+|m|^2)^{r}<\infty
\end{equation}
 (cf. \cite[Chapter 9]{Zemanian}).
 % for the expansions within test and of generalized function spaces  into series).  
 Now taking finite sums of the right-hand side of (\ref{broj1}), we obtain that
$ \mathcal S({\bf R}^{d})  \hat{\otimes}   \mathcal E({\bf S}^{d-1}) $ is dense in $\mathcal{SE}(  {\bf R}^d   \times   {\bf S}^{d-1}  )$.
This completes the proof.
\end{proof}

\section*{Acknowledgement}
We would like to thank to referee of this paper for usefull remarks and his/her contribution to quality of the paper. 

We appreciate the support from Serbian Ministry of Education and  Science(project no. 174024.) and Provincial Secretariat for Science and Technological Development(APV114-451-841/2015-01).

{% in this example, two authors share an institution
   J.~Aleksi\'c, S.~Pilipovi\'c and I.~Vojnovi\'c\\
      Department of Mathematics and informatics, Faculty of Science, University of Novi Sad, Trg D. Obradovi\'ca 4, 21000 Novi Sad, \\
   Serbia
   \email{jelena.aleksic@dmi.uns.ac.rs\\
   stevan.pilipovic@dmi.uns.ac.rs\\
   ivana.vojnovic@dmi.uns.ac.rs}}
% Important: Do not put any empty line here.
% Important: Do not put any empty line here.
% Use \affiliationthree{} for any address positioned under \affiliationone
% Use \affiliationfour{}  for any address positioned under \affiliationtwo

\end{document}